\title{The Burning Number Conjecture Holds Asymptotically}
\newtheorem{thm}{Theorem}[section]
\newtheorem{conjecture}[thm]{Conjecture}
\newtheorem{lem}[thm]{Lemma}
\newtheorem{question}[thm]{Question}
\newtheorem*{theorem*}{Theorem}
\theoremstyle{remark}
\newtheoremstyle{named}{}{}{\itshape}{}{\bfseries}{.}{.5em}{\thmnote{#3 }#1}
\theoremstyle{named}
\subjclass[2020]{Primary 05C57; Secondary 05C82, 05C05
}
\keywords{Graph burning, Burning Number Conjecture, Trees}
\newcommand{\mc}[1]{\mathcal{#1}}
\newcommand{\bb}[1]{\mathbb{#1}}
\newcommand{\bl}[1]{\boldsymbol{#1}}
\newcommand{\eps}{\varepsilon}
\newcommand{\s}[1]{\left(#1\right)}
\newcommand{\m}{\textup{\textsf{m}}}
\newcommand{\R}{\mathbb{R}}
\newcommand{\N}{\mathbb{N}}
\newcommand{\E}{\mathbb{E}}
\renewcommand{\P}{\mathbb{P}}
\DeclareMathOperator{\dpt}{dp}
\DeclareMathOperator{\diam}{diam}
\DeclareMathOperator{\anc}{anc}
\DeclareMathOperator{\comp}{\bar {comp}}
\let\L\relax
\DeclareMathOperator{\L}{L}
\DeclareMathOperator{\Br}{Br}
\begin{document}

\title{The Burning Number Conjecture Holds Asymptotically}

\author{Sergey Norin}
\address{Department of Mathematics and Statistics, McGill University, Montr\'eal, Canada}
\email{snorin@math.mcgill.ca}
\urladdr{www.math.mcgill.ca/snorin/}

\author{J\'er\'emie Turcotte}
\address{Department of Mathematics and Statistics, McGill University, Montr\'eal, Canada}
\email{mail@jeremieturcotte.com}
\urladdr{www.jeremieturcotte.com}

\thanks{The authors are supported by the Natural Sciences and Engineering Research Council of Canada (NSERC). Les auteurs sont support\'es par le Conseil de recherches en sciences naturelles et en g\'enie du Canada (CRSNG)}

\begin{abstract}
	The burning number $b(G)$ of a graph $G$ is the smallest number of turns required to burn all vertices of a graph if at every turn a new fire is started and existing fires spread to all adjacent vertices. The Burning Number Conjecture of Bonato et al. (2016) postulates that $b(G)\leq \left\lceil\sqrt{n}\right\rceil$ for all graphs $G$ on $n$ vertices. We prove that this conjecture holds asymptotically, that is $b(G)\leq (1+o(1))\sqrt n$.
\end{abstract}

\maketitle

\section{Introduction}

Consider the following process (or one-player game) on a finite, usually connected, graph $G$. At the start, all vertices are said to be unburned. Then, at every step (or turn) of the process, we may choose to burn (start a new fire) at some vertex. Furthermore, at every step, all vertices adjacent to burning vertices will start burning as well (the fire spreads to neighbouring vertices at every turn). How many steps does it take before all vertices of the graph are burned?

The \emph{burning number} of $G$, denoted by $b(G)$, is the smallest number of turns required to burn the entire graph $G$. Formally, $b(G)$ is the smallest integer $k$ such that we can cover $G$ with balls of radii $0,\dots,k-1$.

This process first appeared in print in a paper of Alon \cite{alon_transmitting_1992} and was motivated by a question of Brandenburg and Scott at Intel, and formulated as a transmission problem involving a set of processors. Alon \cite{alon_transmitting_1992} proved that the burning number of a $d$-dimension hypercube is exactly $\left\lceil\frac{d}{2}\right\rceil+1$.

This process was later independently defined by Bonato et al. \cite{bonato_how_2016} (also in the related \cite{bonato_burning_2014,roshanbin_burning_2016}), and named \emph{graph burning}. In this case, the inspiration is the spread of information (news, memes, opinions, etc.) in social networks. Since its reintroduction only a few years ago, a large body of work on graph burning has emerged, concentrating mainly on bounding the burning number, either in general or on specific classes of graphs, and on the complexity of graph burning. See \cite{bonato_survey_2021} and references therein for a fairly recent survey of results on graph burning.

Bonato et al. \cite{bonato_how_2016} show that $b(P_n)=\left\lceil\sqrt{n}\right\rceil$ and conjecture that paths are in fact the connected graphs with the largest burning number.

\begin{conjecture}[Burning Number Conjecture]\label{conj:burningnumberconjecture}\cite{bonato_how_2016}
	If $G$ is a connected graph on $n$ vertices, then $b(G)\leq \left\lceil\sqrt{n}\right\rceil$.
\end{conjecture}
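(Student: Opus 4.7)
The plan is to reduce the conjecture to trees and then attack the tree case by a strong induction. Since $b(G)\le b(T)$ for any spanning tree $T$ of a connected graph $G$, it suffices to show that $b(T)\le \left\lceil\sqrt{n}\right\rceil$ for every tree $T$ on $n$ vertices. Writing $k=\left\lceil\sqrt{n}\right\rceil$, the goal is to exhibit vertices $v_1,\ldots,v_k$ with $\bigcup_{i=1}^{k} B_T(v_i,k-i)=V(T)$. Note that in a path each ball centered at a path vertex covers at most $2(k-i)+1$ vertices, and $\sum_{i=1}^{k}(2(k-i)+1)=k^{2}\ge n$ with equality at $n=k^2$, so paths saturate the conjecture and any argument must become tight precisely on pathlike trees.

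For the choice of $v_1$, the center of the largest ball, I would take a longest path $P=u_0 u_1\cdots u_d$ of $T$ and set $v_1=u_{k-1}$. If $d<2(k-1)$ then $T$ has a vertex of eccentricity at most $k-1$ and we are done with one seed, so we may assume $d\ge 2(k-1)$. The ball $B_T(v_1,k-1)$ then contains at least the $2k-1$ initial vertices $u_0,\ldots,u_{2k-2}$ of $P$, together with all hanging subtrees of depth at most $k-1$ along that segment. What remains is a forest $F$ attached along the residual spine $u_k,u_{k+1},\ldots,u_d$, and by maximality of $P$ every hanging subtree in $F$ has diameter bounded in terms of the unused budget. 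Since $|B_T(v_1,k-1)|\ge 2k-1$, this remaining forest has at most $n-(2k-1)\le (k-1)^2$ vertices, exactly matching the budget for $k-1$ further rounds.

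Making the recursion close cleanly requires a strengthened inductive statement: instead of bounding $b(F)$ for an abstract forest (which may have many components and thus a larger burning number than its vertex count alone suggests), one must prove an \emph{anchored} version in which any forest obtained by pruning a ball from a tree on $\le k^2$ vertices can be burned in $k-1$ further rounds using the decreasing budget sequence $k-2,k-3,\ldots,0$, possibly with some vertices declared pre-burned from the first round's fire continuing to spread along the spine. The induction is then on $k$, and at each step one reapplies the longest-path selection inside the subforest whose deepest branch has the largest diameter, peeling off the next ball of radius $k-i$.

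The principal obstacle, and the reason this paper only proves the asymptotic form, is matching $\left\lceil\sqrt n\right\rceil$ \emph{exactly}. Every known greedy recursion, including the one underlying the asymptotic bound, leaks additive slack at each level: branching off the spine, parity mismatches between $d$ and $2(k-1)$, and the treatment of shallow pendant subtrees each cost an $O(1)$ term that compounds over the $k$ levels to an $o(\sqrt n)$ loss, yielding the $(1+o(1))\sqrt n$ bound but not the exact one. Closing this gap would likely require either a sharp isoperimetric-type inequality forcing $|B_T(v_1,k-1)|>2k-1$ unless $T$ is extremely close to a path near $v_1$, together with a stability analysis pushing the extremal case all the way down to $P_n$; or a global argument (for instance via an LP duality, a network-flow reformulation, or a potential function over all $k$ centers simultaneously) that avoids the level-by-level greedy recursion altogether. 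I would expect essentially all of the technical weight of the conjecture to lie in this last step.
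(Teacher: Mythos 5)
The statement you are attempting is \cref{conj:burningnumberconjecture} itself, which the paper does \emph{not} prove: it is an open conjecture of Bonato et al., and the paper only establishes the asymptotic relaxation $b(G)\le(1+o(1))\sqrt n$ (\cref{thm:main}), by an entirely different route (metric trees, the flexible two-ball covers of \cref{l:2cover}, random covers with near-uniform expectation measure as in \cref{t:main2}, and a concentration argument). So there is no ``paper proof'' to match, and your proposal must be judged as a standalone argument for the exact bound $\lceil\sqrt n\rceil$.

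As such, it has a genuine and decisive gap, which you in fact concede yourself: the induction never closes. The reduction to trees, the reformulation as covering by balls of radii $k-1,\dots,0$, and the arithmetic $n-(2k-1)\le(k-1)^2$ are all fine, but after removing $B_T(v_1,k-1)$ the remaining object is a \emph{forest} whose components must be covered collectively by the specific residual radii $k-2,k-3,\dots,0$; the naive inductive hypothesis (each tree on $m$ vertices burns in $\lceil\sqrt m\rceil$ rounds) does not distribute these radii among components, and the ``anchored'' strengthened statement you invoke is never formulated, let alone proved --- it is precisely the step where all known exact attempts fail. There are also smaller inaccuracies (e.g.\ a subtree hanging at $u_j$ with $k\le j\le 2k-2$ is only covered if its depth is at most $k-1-(j-(k-1))$, not $k-1$; the longest-path property bounds its depth by $\min(j,d-j)$, which does not suffice on that half of the segment), but the essential point is that your final two paragraphs are a description of why the recursion leaks additive slack rather than a proof that it does not. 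What you have written is an accurate account of the difficulty of the conjecture, not a proof of it; the paper's contribution is to sidestep exactly this exact-budget recursion by passing to a continuous, fractional setting where the slack can be made $o(\sqrt n)$, which is why it obtains only the asymptotic statement.
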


Since the introduction of graph burning, this conjecture has been the central open problem in the field. The conjecture is known to hold on multiple classes of graphs, such as spiders \cite{das_burning_2018,bonato_bounds_2019}, caterpillars \cite{liu_burning_2020}, some $p$-caterpillars \cite{hiller_burning_2021}, sufficiently large graphs with minimum degree at least 4 \cite{bastide_improved_2022}, and others (for instance \cite{omar_burning_2021}).
 
The best known bound on the burning number of general graphs has been improved multiple times. Suppose $G$ is a connected graph on $n$ vertices.
\begin{itemize}
	\item Bonato et al. \cite{bonato_how_2016} show that $b(G)\leq 2\left\lceil\sqrt{n}\right\rceil-1$.
	\item Bessy et al. \cite{bessy_bounds_2018} show that $b(G)\leq\sqrt{\frac{32n}{19(1-\varepsilon)}}+\sqrt{\frac{27}{19\varepsilon}}$ for every $0<\varepsilon<1$ and $b(G)\leq \sqrt{\frac{12n}{7}}+3$.
	\item Land and Lu \cite{land_upper_2016} show that $b(G)\leq \left\lceil\frac{\sqrt{24n+33}-3}{4}\right\rceil$.
	\item Bastide et al. \cite{bastide_improved_2022} show that $b(G)\leq \left\lceil\sqrt{\frac{4n}{3}}\right\rceil+1$.
\end{itemize}

Our main result is the following, which shows that \cref{conj:burningnumberconjecture}  holds asymptotically.
\begin{restatable}{thm}{Main}\label{thm:main}
     If $G$ is a connected graph on $n$ vertices, then $$b(G)\leq (1+o(1))\sqrt{n}.$$  
\end{restatable}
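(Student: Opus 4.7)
The plan is to first reduce the problem to trees and then analyze trees via a longest-path decomposition together with a careful fire-allocation strategy. For the reduction, any spanning tree $T$ of $G$ satisfies $d_G \leq d_T$, so balls in $T$ are contained in balls in $G$ and any burning sequence of $T$ burns $G$. Thus $b(G) \leq b(T)$, and it suffices to prove the bound for trees.

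Fix $\varepsilon > 0$, let $T$ be a tree on $n$ vertices with $n$ large, and set $k = \lceil (1+\varepsilon)\sqrt n\, \rceil$. I would first dispose of the easy case $\diam(T) \leq 2(k-1)$: the center of $T$ then has eccentricity at most $k-1$, so placing the first fire at the center burns everything within $k$ turns. Assume therefore that $\diam(T) > 2(k-1)$, and let $P = v_0 v_1 \cdots v_D$ be a longest path of $T$. For each $i$, let $T_i$ denote the subtree hanging off $v_i$ once the edges of $P$ are deleted. The defining property of a longest path gives the crucial bound $\text{depth}(T_i) \leq \min(i, D-i)$, and in particular each $T_i$ has depth at most $D/2$.

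The strategy is to place the large-radius fires along $P$ using essentially the optimal path-burning pattern --- which covers $P$ itself and simultaneously burns a substantial neighborhood into each $T_i$ --- and then use the remaining small-radius fires to mop up vertices in the $T_i$'s left uncovered. Heuristically, if this were carried out verbatim one would obtain $b(T) \leq \sqrt n + O(1)$; the role of the $\varepsilon \sqrt n$ slack in the fire budget is to absorb the inefficiency coming from the subtrees. An averaging inequality relating $\sum_i |T_i| = n - D - 1$ and the depths $\text{depth}(T_i)$ should give a quantitative version of this heuristic.

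The main obstacle will be handling trees in which many subtrees near the middle of $P$ have depth close to the maximum $D/2$: there the spine fires cover only a small fraction of each subtree, so the total uncovered mass can be large. To get around this I would either (a) move some of the large-radius fires off $P$ into the deep subtrees, trading a bounded amount of path coverage for a large amount of subtree coverage, or (b) employ a minimum-counterexample argument with a weighted depth potential, showing that such ``pathologically bushy'' trees cannot be minimal. Calibrating this tradeoff inside the linear slack $\varepsilon \sqrt n$ is where the bulk of the technical work will lie.
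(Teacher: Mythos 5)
There is a genuine gap, and it sits exactly where you place it yourself: ``calibrating this tradeoff inside the linear slack $\varepsilon\sqrt n$'' is not a technical afterthought but the entire difficulty of the problem. Your heuristic claim that carrying out the spine-plus-mop-up strategy verbatim would give $b(T)\leq \sqrt n+O(1)$ is unsubstantiated, and the evidence points the other way: decompositions along a longest path (or a spine/caterpillar structure) combined with induction or averaging over $\sum_i|T_i|=n-D-1$ are precisely how the previously known bounds of the form $c\sqrt n$ with $c>1$ (e.g.\ $\sqrt{12n/7}$, $\sqrt{4n/3}$) were obtained, and the constant loss in those arguments comes from exactly the configuration you flag as the obstacle --- many subtrees of depth comparable to $D/2$ attached near the middle of $P$, where a fire of radius $\rho$ placed on the spine covers only $O(\rho)$ of a subtree that may have $\Theta(\rho^2)$ or more vertices. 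Your proposed fixes (moving large fires off the spine, or a minimal-counterexample argument with a depth potential) are stated as intentions, not arguments; the core unresolved question is how to \emph{allocate} the fixed multiset of radii $\{0,1,\dots,k-1\}$ among pieces of the tree so that no radius is wasted, and a recursive application of the same strategy inside a deep subtree faces the identical allocation problem with no obvious gain.

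The paper circumvents this by abandoning the idea of a fixed spine and a fixed assignment of radii altogether. It passes to metric trees, proves that every $l$-minimal piece admits two-ball covers whose radii sum to about $|T|/2$ and can be \emph{chosen from a continuous range} (\cref{l:2cover}), and bootstraps this into \cref{t:main2}: a probability distribution over covers of each piece whose expected radius profile is nearly uniform on $[0,r]$ and frugal in expectation. Decomposing $T$ into many pieces of comparable size, choosing covers independently, and applying Hoeffding plus a union bound then yields a single cover in which each radius class $[\frac{j}{N}k,\frac{j+1}{N}k]$ is used at most $k/N$ times, so the radii can be rounded up to \emph{distinct} integers in $\{1,\dots,k\}$; a final $+1$ rounding returns to the discrete setting. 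This fractional/probabilistic relaxation is the key idea your plan is missing: it is what lets the argument avoid committing particular radii to particular subtrees and hence avoid the constant-factor loss that deep middle subtrees force on any deterministic spine-based allocation. Without an ingredient of this kind (or a genuinely new way to beat the bushy-middle case), your outline cannot be completed as stated.
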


Bastide et al. \cite[Theorems 2-3]{bastide_improved_2022} prove that the Burning Number Conjecture holds for sufficiently large graphs with minimum degree at least four, and almost holds (up to adding some small constant) for graphs with minimum degree three. We note that with their argument, one can deduce from \cref{thm:main} that the Burning Number Conjecture also holds for sufficiently large graphs with minimum degree three. In fact, for these graphs we can get the bound $b(G)\leq \left(\sqrt{\frac{3}{4}}+o(1)\right)\sqrt{n}$ for graphs with minimum degree three, and $b(G)\leq \left(\sqrt{\frac{3}{5}}+o(1)\right)\sqrt{n}$ for graphs with minimum degree at least four. 

The remainder of the paper is occupied by the proof of \cref{thm:main}, which we now outline. 

As noted in \cite{bonato_how_2016}, if $T$ is a spanning tree of $G$, then $b(G)\leq b(T)$; a strategy to burn $T$ still works when edges are added. Hence, it suffices to prove \cref{thm:main} for trees and we focus on trees from now on. 

The results on the burning number mentioned above largely use traditional graph theoretical techniques, i.e. induction.

While we borrow many ideas from the previous work, we deviate from this pattern by shifting to a continuous and probabilistic setting. First, as the burning number is defined in terms of existence of a cover of the graphs by balls with certain radii, we embrace the metric nature of the problem and work with metric, rather than discrete, trees, i.e. metric spaces obtained by replacing the edges of a tree by real intervals.  This has several advantages, in particular allowing us to ignore the scaling issues. \cref{s:metricTrees} is devoted to introducing the setting.
In \cref{s:covers} we prove that existence of covers of any given metric tree by balls, which are  ``frugal'', in a sense that the sum of the radii of balls is about as small as can be expected, and flexible, allowing us to choose from many different possible radii.  In \cref{s:random} we bootstrap the results of \cref{s:covers} to prove \cref{t:main2} - a fractional version of \cref{thm:main}. Informally, \cref{t:main2} says that for every metric tree $T$ and every $r$ there exists a distribution on covers of $T$ by balls of radii at most $r$ which is still frugal in expectation and such that the distribution is almost uniform - uses all radii with roughly the same probability. 
In  \cref{s:proof} we use \cref{t:main2} to prove a version of \cref{thm:main} for metric trees and then transfer the result back to the discrete setting. The key observation here is that if we divide the tree $T$ into many pieces and independently choose a cover of each piece with the properties guaranteed by \cref{t:main2} then the radii in the resulting cover will be almost uniformly distributed, i.e. close to the desired set of radii $\{1,2,\ldots \left\lceil\sqrt{n}\right\rceil\}$.  

We conclude in \cref{s:remark} with a few remarks, in particular briefly discussing potential generalization of the metric version of the Burning Number Conjecture that allows less restrictive sets of radii.
	
\section{Metric trees}\label{s:metricTrees}

As mentioned in the introduction, we will primarily be working with metric trees and we start this section by formally defining them and introducing the necessary notation.

A \emph{metric tree} $T$ is obtained from a non-null (discrete) tree $T_0$ by replacing each edge  with a close interval of positive length, 
so that the end points of the interval are identified with the end vertices of the edge. 
The \emph{length} of $T$, denoted  by $|T|$, is the sum of the length of the intervals comprising it.
Note that for any two points $u,v$ in a metric tree $T$ there exists a unique path (simple curve) in $T$ 
with ends $u$ and $v$, which we denote by $T[u,v]$, and we denote its length by $d_T(u,v)$ or simply by $d(u,v)$. It is easy to see that $(T,d_T)$ is a metric space. In fact, $T$ is also compact, since it is obtained from identifying the ends of a finite number of closed (compact) intervals\footnote{Given an open cover of $T$, one can find a finite subcover by taking the union of the finite subcovers obtained from the compactness of each interval.}.

A \emph{leaf} of $T$ is a leaf of $T_0$, and a \emph{branch point} of $T$ is a vertex of $T_0$ with degree three or greater. Leaves and branch points of a metric tree $T$ can be also defined intrinsically, i.e.
a point $v \in T$ is a leaf if and only if $T \setminus v$ is connected, and a branch point of $T$ is a point of $T$ such that $T \setminus v$ has at least three connected components. Let $L(T)$ and $\Br(T)$ denote the sets of leafs and branch points of $T$, respectively. The components of $T\setminus (L(T) \cup \Br(T))$ are homeomorphic to open intervals. The \emph{segments} of $T$ are the closures of these intervals. Thus every point in $T \setminus{\Br(T)}$  belongs to a unique segment, and the ends of every segment lie in  $L(T) \cup \Br(T)$.

The \emph{diameter} of $T$, denoted by $\diam(T)$, is the maximum distance between two points in $T$, that is$$\diam(T)=\max_{u,v\in T}d_T(u,v).$$ This is well defined by the compactness of $T$ (and given the continuity of $d_T$).

A metric tree $T$ is  \emph{trivial} if $|T| = 0$, in which case $T$ consists of a single point, and $T$ is \emph{non-trivial}, otherwise. A metric tree $T'$ is a \emph{subtree} of a metric tree $T$ if $T' \subseteq T$. For instance, for any $u,v\in T$, $T[u,v]$ is a subtree of $T$ with 2 leaves, unless $u=v$ in which case $T[u,v]$ is trivial. A collection  $\mc{T}=\{T_1,\ldots,T_k\}$ of subtrees of a metric tree $T$ is \emph{a decomposition of $T$} if $T = \cup_{i \in [k]} T_i$ and $T_1,T_2,\ldots, T_k$ are pairwise internally disjoint, i.e. $|T| = \sum_{i=1}^k |T_i|$.

For a subtree $T'$ of a metric tree $T$ we denote by $\bar{T'}$ the closure of $T \setminus T'$. We denote by $\comp(T')$ the set of components of $\bar{T'}$. Note that every component of $\bar{T'}$ is a subtree of $T$, that $\{T'\} \cup  \comp(T')$ is a decomposition of $T$, and that every component of $\bar{T'}$ shares  exactly one point with $T'$.

A subtree $T'$ of  $T$ is \emph{a branch of $T$} if $\bar{T'}$ is connected and non-empty, i.e.   $\bar{T'}$
is a subtree of $T$. It follows from the observations above that for every proper subtree $T'$ of a metric tree $T$
every component of $\bar{T'}$ is a branch of $T$.  The \emph{anchor} $\anc(T')$ of a branch  $T'$ of $T$ is the unique point of $T' \cap \bar{T}'$. 
The \emph{depth} $\dpt(T')$ of a branch $T'$ of $T$ is defined as $$\dpt(T') = \max_{v \in T'}d_T(\anc(T'),v).$$
Again, this is well defined by compactness.

For some minimality arguments, we will need a stronger compactness property. For a metric tree $T$, let $\mathcal S(T)$ be the set of subtrees of $T$. It is easy to see that $\mathcal S(T)$ is the set of closed (hence compact) and connected (in this case as in $\R$, this is equivalent to being path-connected) subsets of $T$. Let $d_{\mathcal S(T)}$ be the Hausdorff distance \cite{hausdorff_set_1957} on $\mathcal S(T)$, formally if $T_1,T_2$ are subtrees of $T$, $$d_{\mathcal S(T)}(T_1,T_2)=\max\left(\max_{v_1\in T_1}d_T(v_1,T_2),\max_{v_2\in T_2}d_T(v_2,T_1)\right),$$
where, for $v\in T$ and a subtree $T'$, $d_T(v,T')=\min_{v'\in T'} d_T(v,v')$. These are well defined (we can write max, min instead of the usual sup, inf) by compactness of $T$. In fact, $(\mathcal S(T),d_{\mathcal S(T)})$ is a compact metric space; the space of non-empty compact subsets of a compact set with this metric is compact, and $\mathcal S(T)$ is a closed subspace of this space given that the limit of connected sets is connected \cite{hausdorff_set_1957} (these properties are fairly straightforward to prove).

Let us now prove some basic properties of metric trees.
 
\begin{lem}\label{l:diamDecomp}
	 If $z  \geq0$ and  $T'$ is a subtree of a metric tree $T$ such that  $\dpt(J) \leq z$ for every $J\in \comp(T')$, then $$\diam(T) \leq \diam(T')+2z.$$
\end{lem}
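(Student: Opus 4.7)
The plan is to pick two points $u,v \in T$ that achieve the diameter (this maximum exists by compactness of $T$ and continuity of $d_T$) and to bound $d_T(u,v)$ by splitting into cases according to whether each of $u,v$ lies in $T'$ or in some component of $\bar{T'}$. The main tool will be the observation, already recorded in the section, that every component $J \in \comp(T')$ meets $T'$ in exactly the single point $\anc(J)$. Therefore any path in $T$ from a point of $J$ to a point outside $J$ must pass through $\anc(J)$, so distances split additively along anchors.

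Concretely, first I would handle the easy case $u,v \in T'$, where $d_T(u,v) \leq \diam(T') \leq \diam(T')+2z$. Next, if $u \in T'$ and $v \in J$ for some $J \in \comp(T')$ with anchor $a = \anc(J)$, then the unique path $T[u,v]$ passes through $a$, so
$$d_T(u,v) = d_T(u,a) + d_T(a,v) \leq \diam(T') + \dpt(J) \leq \diam(T')+z.$$
Finally, if $u \in J_1$ and $v \in J_2$ with $J_1,J_2 \in \comp(T')$ and anchors $a_1,a_2$, there are two sub-cases. If $J_1 = J_2$ with common anchor $a$, the triangle inequality gives
$$d_T(u,v) \leq d_T(u,a)+d_T(a,v)\leq 2z.$$
If $J_1\neq J_2$, the path $T[u,v]$ must exit $J_1$ through $a_1$ and enter $J_2$ through $a_2$, hence
$$d_T(u,v)=d_T(u,a_1)+d_T(a_1,a_2)+d_T(a_2,v)\leq z+\diam(T')+z.$$
In every case the bound $d_T(u,v)\leq \diam(T')+2z$ holds, giving the lemma.

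I do not expect any real obstacle: the whole content is the uniqueness of paths in a metric tree together with the ``exactly one anchor'' property of components of $\bar{T'}$, both of which are already established in the section. The only mild point to be careful about is the degenerate situations where $T'$ is trivial (a single point, so $\diam(T')=0$ but potentially many components at that point) or where $T'=T$ (so $\comp(T')=\emptyset$ and the depth hypothesis is vacuous); both are covered automatically by the case analysis above.
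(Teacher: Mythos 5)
Your proof is correct and follows essentially the same route as the paper: both arguments take diameter-achieving points and split the distance additively through the anchors of the components of $\bar{T'}$ containing them. The only difference is cosmetic—the paper unifies your case analysis by treating a point $u_i \in T'$ as a trivial branch $\{u_i\}$ with $\dpt = 0$, so that a single three-term estimate covers all cases.
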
	
\begin{proof}
	Let $u_1,u_2 \in T$ be such that $d_T(u_1,u_2) = \diam(T)$.  For $i=1,2$, if $u_i \not \in T'$, let  $T_i$ be the component of $\bar{T'}$ such that $u_i \in T_i$, and otherwise, let $T_i=\{u_i\}$ be a trivial branch of $T$; in particular, we always have that $\anc(u_i)\in T'$.
	Then \begin{align*}
	\diam(T) = d_T(u_1,u_2)
	&\leq d_T(u_1,\anc(T_1))+ d_T(\anc(T_1),\anc(T_2))+ d_T(\anc(T_2),u_2) \\
	&\leq \dpt(T_1)  + \diam(T') + \dpt(T_2)\\
	&\leq \diam(T')+2z,
	\end{align*}
	as desired.
\end{proof}

\begin{lem}\label{l:depth1}
	If $T',T''$ are branches of a metric tree $T$ such that $T'' \subseteq T'$, then $$\dpt(T') \geq \dpt(T'') + d_T(\anc(T'),\anc(T'')).$$
\end{lem}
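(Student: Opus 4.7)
The plan is to exploit the tree structure directly: in a metric tree, the only way to pass from $\bar{T''}$ into $T''$ is through the anchor $\anc(T'')$, and this cut-point property lets me add distances along a path and then maximize.

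First I would observe that $T'' \subseteq T'$ implies $T \setminus T'' \supseteq T \setminus T'$, and hence $\bar{T''} \supseteq \bar{T'}$ upon taking closures. In particular, $\anc(T') \in T' \cap \bar{T'}$ lies in $\bar{T''}$, so it is either in $T''$ or in $\bar{T''} \setminus T''$, which suggests a case split.

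In the first case, $\anc(T') \in T''$, so $\anc(T') \in T'' \cap \bar{T''}$; by the uniqueness built into the definition of the anchor this forces $\anc(T') = \anc(T'')$. The desired inequality then reduces to $\dpt(T') \geq \dpt(T'')$, which is immediate since $T'' \subseteq T'$ and both depths are computed from the same point. In the remaining case, $\anc(T') \in \bar{T''} \setminus T''$. For any $v \in T''$, the unique path $T[\anc(T'), v]$ starts in $\bar{T''}$ and ends in $T''$; since $T''$ and $\bar{T''}$ meet only at $\anc(T'')$, the path must traverse $\anc(T'')$, giving the decomposition
\[
d_T(\anc(T'), v) = d_T(\anc(T'), \anc(T'')) + d_T(\anc(T''), v).
\]
Maximizing over $v \in T''$ and using $T'' \subseteq T'$ would then yield
\[
\dpt(T') \;\geq\; \max_{v \in T''} d_T(\anc(T'), v) \;=\; d_T(\anc(T'), \anc(T'')) + \dpt(T''),
\]
which is what we want.

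The one step that needs a careful justification is the claim that the unique path from $\anc(T')$ to a point $v \in T''$ must cross $\anc(T'')$. This is essentially the tree cut-point property in the metric setting: any continuous path from $\bar{T''}$ into $T''$ intersects their (unique) common point, since otherwise one could build two internally disjoint paths between two points of $T$ and violate the tree structure. I do not expect this to be a serious obstacle, as it follows directly from the paper's setup and the uniqueness of paths in $T$.
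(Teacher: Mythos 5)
Your proposal is correct and follows essentially the same route as the paper's proof: establish $\bar{T'}\subseteq\bar{T''}$, split on whether $\anc(T')\in T''$ (in which case the anchors coincide and the claim is trivial), and otherwise use that any path from $\anc(T')$ into $T''$ must pass through $\anc(T'')$, giving the additive distance decomposition before maximizing over $T''\subseteq T'$. The cut-point step you flag is asserted at the same level of detail in the paper, so no gap remains.
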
	
\begin{proof}	
	Since $T''\subseteq T'$, we have that $\bar T'\subseteq \bar T''$. Hence, we know that $\anc(T')\in \bar T'\subseteq \bar T''$. If $\anc(T')\in T''$, then $\anc(T')\in T''\cap \bar T''$ and so $\anc(T')=\anc(T'')$.
	
	The statement is trivial in this case. Hence we can suppose that $\anc(T')\notin T''$. Let $u\in T''$ such that $\dpt(T'')=d_T(\anc(T''),u)$. Since $\anc(T')\notin T''$, any path between $u$ and $\anc(T')$ must go through the unique point of $T''\cap \bar T''$, which is $\anc(T'')$. Hence, $d_T(\anc(T'),u)=d_T(\anc(T'),\anc(T''))+d_T(\anc(T''),u)$.

	We may thus conclude that
	\begin{align*}
		\dpt(T')&=\max_{v \in T'}d_T(\anc(T'),v)\\
		&\geq d_T(\anc(T'),u)\\
		&=d_T(\anc(T''),u)+d_T(\anc(T'),\anc(T''))\\
		&=\dpt(T'')+d_T(\anc(T'),\anc(T'')),
	\end{align*}
	as desired.
\end{proof}

Given a metric tree $T$, the following lemma will allow us in \cref{s:covers} to focus the most technical part of our analysis on a fairly simple subtree  $T'$ of $T$ which  has most 3 leaves.

\begin{lem}\label{l:z} If $T$ is a metric tree, then there exists $z \geq 0$ and a subtree $T'$ of $T$ such that
	\begin{itemize}
		\item[(i)] $T'$ has at most three leaves,
		\item[(ii)] $\dpt(J) \leq z$ for every   $J \in \comp{T}'$,  
		\item[(iii)] for every  $v \in \L(T')$  there exists $J \in \comp{T}'$ such that $\anc(J)=v$ and $\dpt(J) = z,$ 
		\item[(iv)] $|T| \geq |T'|+4z$. 
	\end{itemize} 
\end{lem}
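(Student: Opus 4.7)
My plan is to introduce a one-parameter family $\{T_z\}_{z \geq 0}$ of nested subtrees of $T$ via ``trimming'', and to take $T'$ to be $T_{z^*}$ at a critical threshold $z^*$. One may define $T_z$ directly as the closure of the set of points $v \in T$ such that at least two branches of $T$ anchored at $v$ have depth $\geq z$. By construction, $T_0 = T$, the family is nested ($T_{z_1} \supseteq T_{z_2}$ whenever $z_1 \leq z_2$), and each $T_z$ is a subtree: closedness is immediate, and connectedness follows because any point of $T_z$ lies on a path between two ``deep'' points of $T$, and the union of such paths is connected. The defining property also delivers (ii): any branch $J \in \comp(T_z)$ satisfies $\dpt(J) \leq z$, since interior points of $J$ fail the two-deep-branches requirement.

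For condition (iii), I would argue as follows. If $v$ is a leaf of $T_z$, the defining property gives at least two branches of $T$ anchored at $v$ of depth $\geq z$; since $v$ is a leaf of $T_z$, only one of these lies in $T_z$, and the remaining one(s) are anchored in $\comp(T_z)$. Combined with (ii), such a branch must have depth exactly $z$.

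The quantitative heart of the argument is to control $|T_z|$. Let $k(z) := |\L(T_z)|$. I would show that $k$ is a non-increasing integer-valued function and that $z \mapsto |T_z|$ is Lipschitz and piecewise linear, with slope $-k(z)$: as $z$ increases, each of the $k(z)$ leaves of $T_z$ retracts along its arm at unit speed, while the interior of $T_z$ remains fixed. Hence, wherever differentiable,
\[
\frac{d}{dz}\bigl(|T_z| + 4z\bigr) \;=\; 4 - k(z).
\]
Set $z^* := \inf\{z \geq 0 : k(z) \leq 3\}$. If $z^* = 0$, then $T$ has at most three leaves and one may take $T' = T$, $z = 0$ directly. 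Otherwise, for every $z \in [0, z^*)$ we have $k(z) \geq 4$, so the derivative above is $\leq 0$ on this interval, and integrating yields
\[
|T_{z^*}| + 4 z^* \;\leq\; |T_0| + 0 \;=\; |T|,
\]
which is (iv); choosing $T' = T_{z^*}$ and $z = z^*$ gives (i) by the definition of $z^*$.

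The main technical obstacle is making the piecewise-linear behavior of $|T_z|$ rigorous, especially at jumps of $k(z)$ where several leaves of $T_z$ may simultaneously merge into a branch point of $T$ as $z$ grows. The compactness of $\mathcal{S}(T)$ in the Hausdorff metric, established just above, is what allows one to pass continuously to the limit $T_{z^*}$ and verify the required properties at the infimum; it also underpins the ``minimality''-style argument used to derive (iii).
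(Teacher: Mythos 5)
Your construction of the trimmed trees $T_z$ is a legitimate alternative to the paper's argument, and the qualitative parts can be made to work: $T_z$ is a subtree, condition (ii) holds (though your one-line justification needs the extra observation that the anchor $a$ of $J$ lies in $T_z$ and therefore supplies a second deep branch at points of $J$ just inside the anchor --- ``interior points fail the requirement'' alone only yields $\dpt(J)<2z$), and your derivation of (iii) is correct. The genuine gap is exactly where you flag it: the quantitative heart is asserted rather than proved, and it does not reduce to a routine compactness remark. Three things are missing. First, the slope formula $\frac{d}{dz}|T_z|=-k(z)$: what you actually need is the one-sided estimate $|T_{z+\delta}|\leq |T_z|-k(z)\delta$ for all sufficiently small $\delta>0$, which requires showing that the $\delta$-arcs removed at distinct leaves of $T_z$ are pairwise disjoint (true only because distinct leaves lie on distinct segments when $k(z)\geq 3$) and then invoking the fact that a non-increasing function $f$ satisfies $f(b)-f(a)\leq\int_a^b f'$; without this, ``integrating'' a derivative bound that holds only almost everywhere is not valid in general. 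Second, monotonicity of $k$ is asserted without proof and, more importantly, is not what you need: you need $k(z)\geq 4$ on $[0,z^*)$ (which is automatic from the definition of $z^*$) \emph{and} that the infimum is attained, i.e.\ $k(z^*)\leq 3$, so that $T_{z^*}$ actually satisfies (i); this requires an argument (e.g.\ that $T_{z^*}=\bigcap_{z<z^*}T_z$ and that the leaf count cannot jump up in such a decreasing limit). Third, condition (iv) at $z^*$ needs $|T_{z^*}|=\lim_{z\uparrow z^*}|T_z|$, i.e.\ left-continuity of the length, which follows from continuity of measure on the nested intersection but is nowhere stated.

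It is worth noting that the paper's proof is engineered precisely to avoid this analysis. It chooses a pair $(T',z)$ satisfying (ii)--(iv) with $z$ maximum (possible by compactness of $\mathcal S(T)\times[0,|T|/4]$), and shows that if $T'$ has at least four leaves one can trim each leaf arm by a single fixed $\delta>0$ smaller than every segment of $T'$, obtaining $(T'',z+\delta)$ that still satisfies (ii)--(iv) because $|T''|=|T'|-\delta|\L(T')|\leq |T'|-4\delta$; this contradicts maximality. That is a single discrete perturbation with an exact length identity, so no derivative, no integration, and no continuity of $z\mapsto |T_z|$ or $z\mapsto k(z)$ is ever needed. If you want to salvage your flow-based argument, the cleanest route is to supply the local estimate $|T_{z+\delta}|\leq|T_z|-4\delta$ for $z<z^*$ and small $\delta$, and the closedness of $\{z: k(z)\leq 3\}$; alternatively, replace the flow by the paper's extremal choice.
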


\begin{proof} 
	Let $(T',z)$ satisfying properties (ii)-(iv) be chosen with $z$ maximum. This is possible given that the subspace of $\mathcal S(T)\times \left[0,\frac{|T|}{4}\right]$ of possible solutions is closed hence compact (for instance, using the continuity of $|\cdot|$ and of $\max_{J\in \comp(\cdot)} \dpt(J)$). Note that $(T,0)$ satisfies these properties, and so this subspace is nonempty.
	
	If $T'$ has at most three leaves then the lemma holds, and so we assume for a contradiction that $T'$ has at least four leaves.
	
	Let $\delta > 0$ be such that the length  of every segment of $T'$ is greater than $\delta$. For every  $v\in \L(T')$, let
	$p_v \in T'$ be chosen so that $p_v$ belongs to the unique segment of $T'$ with end $v$ and $d_{T'}(v,p_v)=\delta$. Let $T''$ be the subtree of $T'$ such that $p_v  \in \L(T'')$ for every leaf $v\in L(T')$. In other words, $T''$ is obtained from $T'$ by deleting for each leaf the half-open interval of length $\delta$ containing it.   
	
	We claim that $z'=z + \delta$ and $T''$ satisfy properties (ii)-(iv). Note that this claim contradicts the choice of $z$ and $T'$, and thus implies the lemma. 
	
	For every   $J \in \comp{T''}$  either \begin{itemize}
		\item $J \in \comp{T'}$ , or
		\item $\anc(J)= p_v$ for some $v \in \L(T')$ and  $J$ is the union of $T[v, p_v]$ and all the components of $\bar{T'}$ with anchors in  $T[v, p_v]$.
	\end{itemize} 
	In the first case we have $\dpt(J) \leq z<z'$ by the choice of $(T',z)$ . 
	
	Now suppose that $J$ is a  component  of  $\bar{T}''$ satisfying the conditions of the second case.
	Then $\dpt(J) \geq z+\delta=z'$ by \cref{l:depth1}. Moreover, for every $u \in J$ either $u \in T'$, in which case $d_T(p_v,u) \leq \delta \leq z'$, or there exists $J' \in \comp{T'}$  such that $u \in J'$ and $ \anc(J') \in T[v, p_v]$, in  which case $d_T(p_v,u) \leq d_T(p_v, \anc(J')) + \dpt(J') \leq \delta+z=z'$. It follows that $\dpt(J) \leq z'$, and so $\dpt(J) = z'$. 
	Therefore properties (ii) and (iii) hold; for the latter, note that every $p_v\in \L(T'')$, there is a component of $\comp{T''}$ of the second form (the component containing $p_v$ itself).
	
	Finally, $|T''|= |T'|- \delta|\L(T')|$. As $|\L(T')| \geq 4$ by our assumption, we have $|T| \geq |T'|+4z \geq |T''| + 4(z+\delta)=|T''|+z'$. Thus (iv) also holds.
\end{proof}

A key part of our argument involves dividing the tree $T$ into pieces so that  all but one of them have length at least a certain threshold $l$, and none of them are much bigger. The precise definition that works for our purposes is the following.

For a metric tree $T$ and $l \geq 0$, we say that $T$ is \emph{$l$-minimal} if $|T| \geq l$, and there exists a decomposition $\{T',T''\}$ of $T$, such that $|T'| \leq l$ and $|T''|\leq  l$. Note that this implies that $|T|\leq 2l$.

\begin{lem}\label{l:partition1}
If $l>0$ and $T$ is a metric tree such that $|T| \geq l$, then there exists a decomposition $\{T_0,T_1\}$ of $T$ such that $T_1$ is $l$-minimal.
\end{lem}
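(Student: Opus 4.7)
The plan is to prove the lemma by induction on the number of segments of $T$ (equivalently, on the size of the underlying discrete tree), constructing the decomposition by peeling off an appropriate piece of $T$ near a leaf.

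For the base case, $T$ consists of a single segment of length $|T| \geq l$: I take $T_0$ to be a sub-interval of length $|T|-l$ at one end (a single point if $|T|=l$) and $T_1$ the complementary sub-interval of length $l$. The decomposition $\{T_1, \{v_0\}\}$ for any $v_0 \in T_1$ certifies that $T_1$ is $l$-minimal, since $|T_1|=l$ and $|\{v_0\}|=0$, both at most $l$.

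For the inductive step, I pick a leaf $v$ of $T$ with incident segment $\sigma$ of length $d$. If $d \geq l$, I apply the base-case construction to $\sigma$ itself to obtain $T_1 \subseteq \sigma$ of length $l$ directly, with $T_0$ the remainder of $T$ (connected since $\sigma$ meets the rest of $T$ at the non-$v$ end of $\sigma$). Otherwise ($d<l$), I consider $T' = \overline{T \setminus \sigma}$, which has $|T'|=|T|-d$ and strictly fewer segments. If $|T'|<l$ (so $|T|<l+d<2l$), I cut $T_0 \subseteq \sigma$ of length $|T|-l < d$ from the $v$-end, making $|T_1|=l$ and $T_1$ $l$-minimal as in the base case. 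If $|T'| \geq l$, I apply the inductive hypothesis to $T'$ to obtain $\{T_0', T_1'\}$ with $T_1'$ being $l$-minimal, and then reattach $\sigma$ to form a decomposition of $T$: if the attachment point $u = \sigma \cap T'$ lies in $T_0'$, setting $T_0 := T_0' \cup \sigma$ and $T_1 := T_1'$ immediately works, since $T_0$ is still connected via $u$ and $T_1 = T_1'$ is unchanged.

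The hard part will be the final sub-case, where $u \in T_1'$: setting $T_1 := T_1' \cup \sigma$ requires re-verifying that $T_1$ is $l$-minimal. The naive attempt extends the side of $T_1'$'s 2-decomposition $\{T_1^A, T_1^B\}$ containing $u$ by attaching $\sigma$, which fails when that side has length exceeding $l-d$. I would resolve this by strengthening the inductive hypothesis to ensure that the 2-decomposition of $T_1'$ can be chosen so that $u$ lies on a side of length at most $l-d$ (using the slack that $|T_1'| \leq 2l$), or by re-cutting $T_1 = T_1' \cup \sigma$ at a fresh point $q \in T_1$, sliding $q$ along $\sigma$ or adjusting it inside $T_1'$ to balance the two resulting pieces via an intermediate-value argument. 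As an alternative, one could abandon the induction altogether and argue by a direct compactness/continuity principle on the space $\mathcal{S}(T)$ of subtrees, selecting an extremal pair $(T_0, T_1)$ -- for instance, one minimising $|T_1|$ subject to $T_1$ being $l$-minimal -- and showing the extremiser has the desired form.
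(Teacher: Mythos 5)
Your induction sets up cleanly, and the base case together with the sub-cases $d\geq l$, $|T'|<l$, and $u\in T_0'$ are correct. But the sub-case you yourself flag as "the hard part" ($u\in T_1'\setminus T_0'$) is exactly where the content of the lemma lies, and none of your proposed repairs is carried out or even clearly workable. The inductive hypothesis gives you \emph{some} decomposition $\{T_0',T_1'\}$ of $T'$ with $T_1'$ $l$-minimal, with no control over it: for instance, if $T'$ is an interval of length $2l$, the hypothesis may return $T_1'=T'$ and $T_0'$ a single endpoint, and then for an attachment point $u$ in the interior of $T'$ you are forced into $T_1=T_1'\cup\sigma$, which has length $2l+d>2l$ and hence is not $l$-minimal for \emph{any} re-cut -- so the intermediate-value re-balancing cannot succeed, and one must instead move the $T_0'/T_1'$ boundary itself, which the induction does not provide. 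The proposed strengthening of the hypothesis ("choose the 2-decomposition of $T_1'$ so that $u$ lies on a side of length at most $l-d$") also fails in general: when $T'$ is an interval of length exactly $l$, the only admissible $T_1'$ is $T'$ itself with both sides of any balanced split of length up to $l$, and there is no slack to route $u$ onto a short side. Finally, your fallback extremal formulation -- minimising $|T_1|$ \emph{subject to $T_1$ being $l$-minimal} -- is circular as stated, because the non-emptiness of that family is precisely the lemma to be proved.

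For comparison, the paper avoids the induction entirely and uses the extremal idea in a non-circular form: among all decompositions $\{T_0,T_1\}$ of $T$ with $|T_1|\geq l$ (a non-empty family, since one may take $T_0$ a single point and $T_1=T$), choose one with $|T_1|$ minimal, using compactness of $\mathcal S(T)$. One then shows this minimal $T_1$ is $l$-minimal by a short case analysis on the contact point $v=T_0\cap T_1$: if $v$ is not a leaf of $T_1$, split $T_1$ at $v$ into two branches; if both have length at most $l$ we are done, and otherwise the longer branch yields a decomposition of $T$ with a strictly smaller $T_1$ of length still at least $l$, contradicting minimality; if $v$ is a leaf of $T_1$, split $T_1$ at a nearby interior point $v'$ with $d_T(v,v')<l$ and argue similarly. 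If you want to rescue your leaf-peeling induction, you would need to build this kind of minimality (or an equivalent control on where the cut can be placed relative to $u$) into the inductive hypothesis; as written, the proof has a genuine gap.
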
	

\begin{proof} 
	Let a decomposition $\{T_0,T_1\}$ of $T$ be chosen so that $|T_1| \geq l$ and subject to this $T_1$ is minimal. This is possible given that $\mathcal S(T)$ is compact (consider the subspace of $\mathcal S(T)$ defined by valid $T_1$, it is not too hard to see that it is closed, in particular using the continuity of $|\cdot|$). We wish to show that $T_1$ is $l$-minimal. Let $v$ be the unique point in $T_0 \cap T_1$. Note that if $T_1=T$, then $T_0=\{v\}$ will simply be the trivial subtree containing one of the leaves of $T$.
	
	Suppose first that $v$ is not a leaf of $T_1$. Then there exist a decomposition $\{T',T''\}$ of $T_1$ such that $T',T''$ are non-trivial branches of $T_1$ with anchor $v$. If $|T'| \leq  l$ and $|T''| \leq  l$ then $T_1$ is $l$-minimal as desired. Thus we assume without loss of generality that $|T'| > l$. Then $\{T_0 \cup T'',T'\}$ is a decomposition of $T$ contradicting the choice of $\{T_0,T_1\}$.
	
	It remains to consider the case when $v$ is a leaf of $T_1$. Let $v'$ be chosen in the interior of the segment of $T_1$ containing $v$, so that $d_T(v,v')<l$. Then there exists a unique decomposition  $\{T',T''\}$  of $T_1$ such that   $T',T''$ are branches of $T_1$ with anchor $v'$. Without loss of generality assume that $v \in T''$. Then $T''$ is an interval with ends $v$ and $v'$ and so $|T''| < l$. If $|T'| \geq l$ then $\{T_0 \cup T'', T'\}$ again contradicts the choice of $\{T_0,T_1\}$. Otherwise, $T_1$ is $l$-minimal, as desired.
\end{proof}	

The next result immediately follows from \cref{l:partition1} by induction on the size of the decomposition.

\begin{lem}\label{l:partition2} For every tree $T$ and every $l>0$ there exist a decomposition $\{T_0,T_1,\ldots,T_k\}$ of $T$ such that $|T_0| \leq l$, and $T_1,\ldots,T_k$ are $l$-minimal.
\end{lem}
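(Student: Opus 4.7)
The plan is to iterate \cref{l:partition1}. If $|T| \leq l$, then the trivial decomposition $\{T\}$ works, taking $T_0 = T$ and $k=0$. Otherwise, $|T| > l$ so \cref{l:partition1} applies and yields a decomposition $\{S, T_k\}$ of $T$ with $T_k$ being $l$-minimal. Since $l$-minimality requires $|T_k| \geq l$, we have $|S| = |T| - |T_k| \leq |T| - l$. If $|S| \leq l$, we set $T_0 = S$ and stop; otherwise, we apply \cref{l:partition1} to $S$ and repeat.

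More precisely, define a finite sequence $S_0, S_1, \ldots, S_m$ with $S_0 = T$ by the following rule: as long as $|S_i| > l$, apply \cref{l:partition1} to $S_i$ to obtain a decomposition $\{S_{i+1}, R_{i+1}\}$ of $S_i$ where $R_{i+1}$ is $l$-minimal. The process terminates as soon as $|S_m| \leq l$. Since every iteration decreases the remaining length by at least $l$, termination occurs after at most $m \leq \lfloor |T|/l \rfloor$ steps. Then set $T_0 = S_m$, $k = m$, and $T_i = R_{m-i+1}$ for $i = 1, \ldots, k$.

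A straightforward induction on $i$ shows that $\{S_i, R_{i+1}, R_{i+2}, \ldots, R_m\}$ is a decomposition of $T$ for every $i \leq m$; this is simply assembling nested internally disjoint unions, using that each step is a decomposition of $S_i$. Taking $i = 0$ yields that $\{T_0, T_1, \ldots, T_k\}$ is a decomposition of $T$, and by construction $|T_0| = |S_m| \leq l$ and each $T_i$ for $i \geq 1$ is $l$-minimal.

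There is no real obstacle here; the only point worth noting is that in the continuous/metric setting one cannot induct directly on a discrete parameter of $T$, but the length bound $|S_{i+1}| \leq |S_i| - l$ together with $|T| < \infty$ provides the needed termination.
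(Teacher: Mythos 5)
Your proposal is correct and is exactly the argument the paper intends: the paper dispenses with it in one line ("immediately follows from \cref{l:partition1} by induction"), and you have simply written out that iteration, with the termination correctly justified by $|S_{i+1}| \leq |S_i| - l$ coming from the $l$-minimality of each peeled-off piece.
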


The following lemma is a variant of \cref{l:partition2} that will allow us to break a metric tree in pieces of roughly the same size.

\begin{lem}\label{l:partition3}
	For every metric tree $T$ and any $0 < l \leq |T|$ there exists a decomposition $\mathcal T$ of $T$ such that $l \leq |T'| \leq 3l$ for every $T'\in \mathcal T$.
\end{lem}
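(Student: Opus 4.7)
The plan is to derive \cref{l:partition3} from \cref{l:partition2} by absorbing the small ``leftover'' piece into one of the minimal ones. Apply \cref{l:partition2} with the same parameter $l$ to obtain a decomposition $\{T_0,T_1,\ldots,T_k\}$ of $T$ with $|T_0|\leq l$ and each $T_i$ $l$-minimal for $i\in[k]$ (so in particular $l\leq |T_i|\leq 2l$). If $|T|\leq 3l$ then $\{T\}$ is itself a valid decomposition since $l\leq |T|\leq 3l$, so we may assume $|T|>3l$, which in particular forces $k\geq 1$.

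The remaining work is to get rid of the possibly undersized piece $T_0$ by fusing it with a neighbouring minimal piece. Since $T$ is connected and $\{T_0,T_1,\ldots,T_k\}$ is a decomposition with $k\geq 1$, the piece $T_0$ must share a point with at least one $T_i$ for some $i\in[k]$; pick any such $i$ and define $T_i'=T_0\cup T_i$. Because $T_0$ and $T_i$ are both subtrees of $T$ sharing a common point, their union $T_i'$ is connected and hence itself a subtree. Replacing $T_0$ and $T_i$ by $T_i'$ gives a new collection $\mathcal T=\{T_i'\}\cup\{T_j : j\in[k],\ j\neq i\}$ whose union is $T$, and whose pieces are still pairwise internally disjoint: for $j\neq i$, the overlaps $T_0\cap T_j$ and $T_i\cap T_j$ each have length zero, hence so does $T_i'\cap T_j$.

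Finally we check the size bounds on the members of $\mathcal T$. Every unchanged $T_j$ ($j\in[k]\setminus\{i\}$) satisfies $l\leq |T_j|\leq 2l\leq 3l$. For the merged piece we have $|T_i'|=|T_0|+|T_i|\geq |T_i|\geq l$ and $|T_i'|=|T_0|+|T_i|\leq l+2l=3l$, which are exactly the required bounds. There is no real obstacle here: the only point worth verifying is the existence of a $T_i$ sharing a boundary point with $T_0$, which is immediate from the connectivity of $T$ together with $T_0\neq T$.
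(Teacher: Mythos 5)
Your proof is correct and follows essentially the same route as the paper: apply \cref{l:partition2}, then merge the possibly undersized piece $T_0$ with an adjacent $l$-minimal piece, using connectivity of $T$ to find such a neighbour. The only cosmetic difference is how you dispatch the degenerate case (you check $|T|\leq 3l$ up front, while the paper notes that $k=0$ forces $|T_0|=l$); both are fine.
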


\begin{proof}
	Applying \cref{l:partition2} we obtain a decomposition $\mathcal T=\{T_0,\dots,T_k\}$ of $T$ such that $|T_0| \leq l$ and $T_1,\dots,T_k$ are $l$-minimal. By $l$-minimality, $l\leq |T_i| \leq 2l$ for every $i\in[k]$. Since $|T| \geq l$, if $k = 0$  then $|T_0| =l$ and $\mc{T}$ satisfies the lemma. If $k \geq 1$, since $T$ is connected, $T_0$ has to share a point with at least one other $T_i$, without loss of generality say $T_1$. Then, $l\leq |T_0\cup T_1| \leq 2l+l=3l$ and so $\{T_0\cup T_1,T_2,\dots,T_k\}$ satisfies the statement.
\end{proof}

\section{Covers}\label{s:covers}

In this section, we analyze the possible  (multi) sets of radii of balls that can be used to cover a given metric tree. These sets, which we simply call covers are formally defined as follows.

For a metric tree $T$,  $r \geq 0$ and $v \in T$, we denote by $B_T(v,r)$ the closed ball of radius $r$ with center $v$, i.e. 
$$B_T(v,r)=\{u \in T | d_T(v,u) \leq r\}.$$

A sequence $(r_1,\ldots,r_m)$ of non-negative reals is a \emph{cover} of $T$ if there exist $v_1,\ldots,v_m \in T$ such that $T = \cup_{i=1}^m B_T(v_i,r_i).$  

The equivalent of the following lemma, which precisely determines the minimum radius of a single ball that can cover a given metric tree, is well-known for discrete trees, but we include the proof for completeness.

\begin{lem}\label{l:diam}
	If $T$ is a tree, then there exists $y \in T$ such that $T= B_T\left(y,\frac{\diam(T)}{2}\right)$. In particular, $(r) \in \mc{C}(T)$ for every $r \geq \frac{\diam(T)}{2}.$
\end{lem}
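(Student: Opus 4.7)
The plan is to pick $y$ as the midpoint of a diameter-realizing path, and use the tree-like geometry to show that no point can be too far from it. In detail: by compactness of $T$, there exist $u_1,u_2\in T$ with $d_T(u_1,u_2)=\diam(T)$. The path $T[u_1,u_2]$ is isometric to the interval $[0,\diam(T)]$, so it contains a (unique) midpoint $y$ with $d_T(y,u_1)=d_T(y,u_2)=\diam(T)/2$. I claim $T=B_T(y,\diam(T)/2)$.

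Suppose for contradiction there exists $w\in T$ with $d_T(w,y)>\diam(T)/2$. Because $T$ is a tree, the paths $T[w,u_1]$ and $T[w,u_2]$ share an initial segment, and their union with $T[u_1,u_2]$ is a subtree of $T$ homeomorphic to a (possibly degenerate) tripod. Let $p\in T[u_1,u_2]$ be the unique point such that $T[w,u_1]=T[w,p]\cup T[p,u_1]$ and $T[w,u_2]=T[w,p]\cup T[p,u_2]$; this is the unique point of $T[w,y]\cap T[u_1,u_2]$ closest to $w$. Then
\begin{align*}
d_T(w,u_1)&=d_T(w,p)+d_T(p,u_1),\\
d_T(w,u_2)&=d_T(w,p)+d_T(p,u_2),\\
d_T(w,y)&=d_T(w,p)+d_T(p,y).
\end{align*}
Since $p\in T[u_1,u_2]$, we have $d_T(p,u_1)+d_T(p,u_2)=\diam(T)$, so without loss of generality $d_T(p,u_1)\geq \diam(T)/2$, and since $y$ is the midpoint, $d_T(p,u_1)=\diam(T)/2+d_T(p,y)$. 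Combining these identities gives
\[
d_T(w,u_1)=d_T(w,p)+d_T(p,y)+\frac{\diam(T)}{2}=d_T(w,y)+\frac{\diam(T)}{2}>\diam(T),
\]
contradicting the definition of $\diam(T)$. Hence $T=B_T(y,\diam(T)/2)$.

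The ``in particular'' statement is immediate: for any $r\geq \diam(T)/2$, monotonicity of balls in $r$ yields $T=B_T(y,\diam(T)/2)\subseteq B_T(y,r)$, so $(r)$ is a cover of $T$. The only delicate part of the argument is the tree-geometric identity $d_T(w,u_i)=d_T(w,p)+d_T(p,u_i)$, which relies on uniqueness of paths in a metric tree and the existence of the branch point $p$; everything else is an immediate consequence of the midpoint choice.
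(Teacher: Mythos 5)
Your proof is correct and follows essentially the same approach as the paper: take $y$ to be the midpoint of a diametral path and derive a contradiction from $d_T(w,y)>\diam(T)/2$ by exhibiting a point at distance more than $\diam(T)$ from $w$. The only cosmetic difference is that you route the computation through the tripod branch point $p$, whereas the paper observes directly that the path from $w$ to the far endpoint must pass through $y$; both rest on the same uniqueness-of-paths fact.
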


\begin{proof}
	Let $u,v\in T$ such that $d_T(u,v)=\diam(T)$. Let $y$ be the point on the path $T[u,v]$ at distance exactly $\frac{\diam(T)}{2}$ from both $u,v$.
	
	Let $w\in T$. We claim that $d(w,y)\leq \frac{\diam(T)}{2}$. Suppose otherwise that $d(w,y)>\frac{\diam(T)}{2}$. Let $\mathcal T$ be the decomposition defined by breaking $T$ at $y$, in other words $\mathcal T$ is the set of minimal branches which anchor $y$. Let $T_w\in \mathcal T$ be such that $w\in T_w$, and analogously for $u,v$. Since $u,v$ are necessarily in different elements of $\mathcal T$, we have that, without loss of generality, $T_u\neq T_w$.
	
	Hence, the path $T[u,w]$ must contain $y$ and thus has length $d_T(u,y)+d_T(y,w)>\diam(T)$, which is a contradiction.
\end{proof}

One first application of \cref{l:diam} is the following.

\begin{lem}\label{l:largeeps}
	If $r_1,\ldots,r_k \in \bb{R}_+$ and $T$ is a metric tree such that $|T| \leq \sum_{i=1}^k r_i$, then $$(r_1,\ldots,r_k) \in \mc{C}(T).$$
\end{lem}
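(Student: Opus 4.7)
My plan is to reduce the statement to the classical observation that a metric tree admits a $1$-Lipschitz surjection from an interval of length $2|T|$. Concretely, I would pick any rooting of the underlying discrete tree $T_0$ and recursively build a depth-first closed walk that traverses each edge of $T_0$ twice (once descending, once ascending). Parametrizing this walk by arclength yields a continuous surjection $\phi:[0,2|T|]\to T$ which satisfies $d_T(\phi(s),\phi(t))\le |t-s|$ for all $s,t\in [0,2|T|]$: indeed, restricted to $[\min(s,t),\max(s,t)]$, $\phi$ traces a curve of length $|t-s|$ in $T$ from $\phi(s)$ to $\phi(t)$, and $d_T$ is bounded above by the length of any such curve. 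Existence of $\phi$ follows by an easy induction on the number of segments of $T_0$.

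With $\phi$ in hand, I would exploit the hypothesis $\sum_{i=1}^k r_i\ge |T|$, equivalently $\sum_{i=1}^k 2r_i\ge 2|T|$, to choose a partition of $[0,2|T|]$ into consecutive closed subintervals $I_1,\dots,I_k$ with $|I_i|\le 2r_i$ for each $i$. For each $i$, I would set $v_i=\phi(m_i)$, where $m_i$ is the midpoint of $I_i$. For any $s\in I_i$, we have $|s-m_i|\le |I_i|/2\le r_i$, and so the Lipschitz bound gives $d_T(v_i,\phi(s))\le r_i$, hence $\phi(I_i)\subseteq B_T(v_i,r_i)$. Surjectivity of $\phi$ then yields $T=\bigcup_i \phi(I_i)\subseteq\bigcup_i B_T(v_i,r_i)$, exhibiting the desired cover.

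The only (mild) obstacle is formalizing the construction of the arclength-parametrized closed walk $\phi$ for a general metric tree, but this is the standard Euler-tour argument adapted to the metric setting, and the $1$-Lipschitz property is automatic from arclength parametrization. Everything else is essentially bookkeeping on the partition of $[0,2|T|]$.
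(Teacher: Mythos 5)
Your proof is correct, but it takes a genuinely different route from the paper. You reduce the lemma to the doubled Euler tour: an arclength-parametrized closed walk giving a $1$-Lipschitz surjection $\phi:[0,2|T|]\to T$, after which the hypothesis $\sum_i 2r_i\ge 2|T|$ lets you chop $[0,2|T|]$ into consecutive intervals $I_i$ of length at most $2r_i$ and place the $i$-th center at $\phi$ of the midpoint of $I_i$; the Lipschitz bound and surjectivity finish the argument (degenerate intervals cause no trouble since $r_i\ge 0$). This is the classical tour-doubling trick, self-contained and essentially combinatorial, and it avoids any appeal to compactness of the space of subtrees. The paper instead argues by induction on $k$: if $|T|\le r_1$ a single ball suffices by \cref{l:diam}; otherwise \cref{l:partition1} splits off an $r_1$-minimal subtree $T_1$ with $\diam(T_1)\le 2r_1$, which one ball of radius $r_1$ covers by \cref{l:diam}, and the remainder has length at most $\sum_{i\ge 2}r_i$, so induction applies. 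The paper's route is chosen because \cref{l:diam} and \cref{l:partition1} are needed elsewhere anyway, so the lemma comes almost for free from that machinery; your route buys independence from those lemmas at the modest cost of formalizing the metric Euler tour, which, as you note, is a routine induction on the segments of $T_0$.
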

\begin{proof}
	By induction on $k$. 
	If $|T| \leq r_1$ then $(r_1)$ is a cover of $T$ by \cref{l:diam} and the lemma holds. Otherwise by \cref{l:partition1} there exists a decomposition $\{T_0,T_1\}$ of $T$ such that $T_1$ is $r_1$-minimal. Then $|T_1| \geq r_1$ and $(r_1)$ is a cover of $T_1$ by \cref{l:diam} since $\diam(T_1)\leq|T_1|\leq 2r_1$. Meanwhile,  $|T_0| \leq  \sum_{i=1}^{k-1} r_i$ and so $(r_2,\dots,r_k)$ is a cover of $T_0$ by the induction hypothesis. It follows that $(r_1,\ldots,r_k)$ is a cover of $T$, as desired.
\end{proof}

A similar (but discrete) lemma appears in \cite{bessy_bounds_2018} (a slightly stronger version appears in \cite{land_upper_2016}). Note that this last lemma already gives an interesting bound on the burning number of graphs. Consider the radii $\{0,\dots,k\}$ and a tree $T$ on $n$ vertices (which we will consider a metric tree). Then, if $|T|\leq\sum_{i=1}^{k} i=\frac{k(k+1)}{2}$, i.e. if $k\geq \sqrt{2n}+O(1)$, then $T$ can be burned (some of the centers of the balls might not be on vertices, so we might also need to increase each radii by 1, see the proof of \cref{lem:integralrounding} for more details). This approach to getting a bound of the form $b(G)\leq \sqrt{2n}+O(1)$ first appeared in \cite{bessy_bounds_2018} (it also appears in \cite{land_upper_2016,bonato_improved_2021}).

The following lemma is the main result of this section and is one of the key parts of our proof of \cref{thm:main}. It shows that every metric tree $T$ has covers of size two with the sum of the radii at most $\frac{|T|}{2}$, and a broad range of choices of radii. There is a certain technical trade-off here, where for certain trees the range is smaller but then so is the sum of radii. The precise  values of parameters matter here and are just right to make the following arguments work.

\begin{lem}\label{l:2cover} If $T$ is an $l$-minimal tree for some $l>0$, then there exists $0 \leq a \leq \min\{\frac{l}{2}-\frac{|T|}{4},\frac{|T|}{12}\}$ such that  \begin{itemize}
		\item $$\s{ \frac{|T|}{4}-3a-x
			, \frac{|T|}{4}+a+x
		}$$ is a cover of $T$ for all $0 \leq x \leq \frac{|T|}{4}-3a$, and 
		\item $$\s{ \frac{|T|}{4}+a+x
		}$$ is a cover of $T$ for 
		every $x \geq \frac{|T|}{4}-3a$.
	\end{itemize}
\end{lem}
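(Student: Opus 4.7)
My plan is to apply Lemma~\ref{l:z} to obtain a subtree $T^* \subseteq T$ with at most three leaves together with a parameter $z \geq 0$ satisfying properties (i)--(iv). The key observation is that, since every component $J$ of $\bar{T^*}$ has $\dpt(J) \leq z$, any closed ball $B_T(v,r)$ centred at $v \in T^*$ covers every such $J$ whose anchor lies within distance $r - z$ of $v$; hence it suffices to cover $T^*$ by the ``effective radii'' $(r_1 - z, r_2 - z)$. I will then split on the number of leaves of $T^*$ and choose $a$ accordingly. The single-ball bullet will in both cases follow from Lemma~\ref{l:diam} together with $\diam(T) \leq \diam(T^*) + 2z$ (from Lemma~\ref{l:diamDecomp}) and the covering sum bound at $x = |T|/4 - 3a$.

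If $T^*$ has at most two leaves (a path or a point), I take $a = 0$. Any two non-negative radii summing to at least $|T^*|/2$ cover a path, and the required inequality $(r_1 - z) + (r_2 - z) = |T|/2 - 2z \geq |T^*|/2$ is precisely property (iv). If instead $T^*$ is a tripod with branch lengths $\ell_1 \leq \ell_2 \leq \ell_3$ meeting at a central point $c$, I set $a = \max\{0,\, (\ell_1 + \ell_2)/2 + z - |T|/4\}$ and verify that $(r_1 - z, r_2 - z)$ covers $T^*$. The characterisation I use is that $(r_1', r_2')$ with $r_1' \leq r_2'$ covers such a tripod if and only if (I) $r_1' + r_2' \geq \diam(T^*)/2 = (\ell_2 + \ell_3)/2$, and (II) $r_2' \geq (\ell_1 + \ell_2)/2$. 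Condition (I) unfolds to $|T| - \ell_2 - \ell_3 - 4z \geq 4a$, which follows from property (iv) and $\ell_2 \leq \ell_3$; condition (II) holds at $x = 0$ by the choice of $a$ (trivially if $a = 0$) and only improves as $x$ grows. Necessity of (II) is a pigeonhole on the three leaves of $T^*$; sufficiency is achieved constructively by placing ball $1$ on $B_3$ so that it covers the end of $B_3$, and ball $2$ at the $1$-centre of the resulting spider leftover. For $x$ large enough that $r_1 - z$ would become negative, (I) implies that ball $2$ alone already covers $T$, so the argument degenerates cleanly.

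The main obstacle, requiring the most care, is the constraint $a \leq l/2 - |T|/4$ in the tripod case, which reduces to the auxiliary inequality $\ell_1 + \ell_2 + 2z \leq l$. I will prove this from $l$-minimality by a short case analysis on the witnessing decomposition $\{T', T''\}$, which cuts $T$ at a single point $p$. If $p$ lies in $T^*$, then no matter whether $p$ is at $c$ or in the interior of a branch, one of $T', T''$ must contain two whole branches $B_j, B_k$ of $T^*$ together with the depth-$z$ components at their leaf endpoints provided by Lemma~\ref{l:z}(iii), hence has length at least $\ell_j + \ell_k + 2z \geq \ell_1 + \ell_2 + 2z$. If $p$ lies in a component of $\bar{T^*}$, one side contains all of $T^*$ along with the depth-$z$ extensions at (at least) two of the three leaves, with length $\geq |T^*| + 2z \geq \ell_1 + \ell_2 + 2z$. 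Either way, both parts having length at most $l$ forces the inequality. The remaining constraint $a \leq |T|/12$ is a short algebraic consequence of $\ell_1 + \ell_2 \leq 2\ell_3$ combined with property (iv).
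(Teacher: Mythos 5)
Your proposal is correct and follows essentially the same route as the paper's proof: apply \cref{l:z}, reduce to covering the at-most-three-leaf core with radii shrunk by $z$, take $a=\max\{0,\tfrac{\ell_1+\ell_2}{2}+z-\tfrac{|T|}{4}\}$, use the depth-$z$ witnesses at the leaves together with $l$-minimality to get $\ell_1+\ell_2+2z\leq l$, and place the small ball at the end of the longest leg with the large ball covering the rest. The only differences are presentational (your tripod-cover characterization (I)--(II) and the case analysis on the cut point replace the paper's explicit placement of $p_1,p_2$ on $T[v_2,v_3]$ and its pigeonhole on the three points $w_i$), and these do not change the substance of the argument.
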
	

\begin{proof}
	Let $T'$ and $z$ be obtained by applying \cref{l:z} to $T$. Let $v_0,v_1,v_2,v_3 \in T'$ be chosen so that $L(T') \subseteq \{v_1,v_2,v_3\}$, $v_0$ is the unique point shared by the paths $T[v_1,v_2]$, $T[v_1,v_3]$ and $T[v_2,v_3]$.\footnote{More explicitly, if $T'$ has exactly three leaves we choose $v_0$ to be the unique branch point of $T'$ and $v_1,v_2,v_3$ to be the leaves. If $T$ has at most two leaves we choose $v_1$ and $v_2$ so that $L(T') \subseteq \{v_2,v_3\}$ and choose $v_0=v_1 \in T'$ arbitrarily.}  As
	$T'$ has at most three leaves, by condition (i) of \cref{l:z}, such a choice is always possible. Let $l_i = |T[v_0,v_i]|$ for $i=1,2,3$. Without loss of generality, suppose $l_1\leq l_2\leq l_3$.
	
	Then $|T'| = l_1 +l_2 +l_3$ and as $T'$ satisfies condition (iv) of \cref{l:z} we have\begin{equation}\label{e:T}|T| \geq l_1 +l_2 +l_3 + 4z. \end{equation} We have $\diam(T') =\max_{u,v \in L(T')}d_{T}(u,v)=d_{T}(v_2,v_3)=l_2+l_3$. From
	\cref{l:diamDecomp} (using condition (ii) of \cref{l:z}) it follows that $\diam(T) \leq l_2+l_3+2z$. 
	
	We show that $$a = \max\left\{0, \frac{l_1+l_2}{2} +z -\frac{|T|}{4}\right\}$$ satisfies the conditions of the lemma. 
	
	First, we need to verify that $a \leq\frac{|T|}{12}$ and $a \leq \frac{l}{2}-\frac{|T|}{4}$. The first of this inequalities follows immediately from \eqref{e:T}, as $$\frac{l_1+l_2}{2} +z \leq \frac{l_1+l_2+l_3+3z}{3} \leq \frac{|T|}{3},$$
	and so $a \leq \max\left\{0, \frac{|T|}{3} -\frac{|T|}{4}\right\} = \frac{|T|}{12}.$
	
	Showing that $a \leq \frac{l}{2}-\frac{|T|}{4}$ takes a bit more effort. 
	As $T$ is $l$-minimal there exists a decomposition $\{S_0,S_1\}$ of $T$ such that $|S_0| \leq l$ and $|S_1| \leq l$. In particular, $|T|=|S_0|+|S_1|\leq 2l,$ and so $\frac{l}{2}-\frac{|T|}{4} \geq 0$. Thus we only need to verify the case where  $a = \frac{l_1+l_2}{2} +z -\frac{|T|}{4} > 0$. We thus want to prove that $l\geq 2\left(a+\frac{|T|}{4}\right)=l_1+l_2+2z$. Substituting \eqref{e:T} in the above inequality $a>0$, we get  $l_1+l_2 > l_3$. In particular, $l_1 >0$ and so $T'$ has exactly three leaves $v_1,v_2$ and $v_3$. By condition (iii) of \cref{l:z}
	there exists a component $T'_i$ of $\bar{T}'$ with $v_i = \anc(T'_i)$ and $\dpt(T'_i)=z$ for every $i \in \{1,2,3\}$. Thus there exists $w_i \in T'_i$ such that $d_T(v_i,w_i)=z$.
	
	Without loss of  generality we assume that there exist distinct $i,j \in \{1,2,3\}$ such that $w_i,w_j \in S_0$. Then  \begin{align*}l  \geq |S_0| \geq d_T(w_i,w_j)=d_T(w_i,v_i)+d_T(v_i,v_j)+d_T(v_j,w_j)=z+(l_i+l_j)+z \geq l_1+l_2+2z,\end{align*}
	as desired.
	
	For $x \geq 0$, let $r_1=\frac{|T|}{4}+a+x$ and $r_2 = \frac{|T|}{4}-3a-x$ (note that since $a,x\geq 0$, we always have $r_1\geq r_2$). It remains to show that $(r_1)$ or $(r_1,r_2)$ is a cover of $T$.
	
	If $r_1 \geq \frac{l_2+l_3}{2}+z \geq \diam(T)/2$, then $(r_1)$ is a cover of $T$ by \cref{l:diam},  and so the lemma holds if  $x \geq  \frac{l_2+l_3}{2}+z - a - \frac{|T|}{4}$. Note that if we also have that $x\leq\frac{|T|}{4}-3a$, then we can say that $(r_1,r_2)$ is a cover, given that $(r_1)$ alone is a cover. This explains the different cutoff on $x$ between the statement of the theorem and in this proof.
	
	Thus we assume $x \leq  \frac{l_2+l_3}{2}+z - a - \frac{|T|}{4}$. In this regime we will show that $(r_1,r_2)$  is a cover of $T$. First, we have   
	\begin{align*} 
	r_2&=\frac{|T|}{4}-3a-x\\
	&\geq \frac{|T|}{2}-2a - \frac{l_2+l_3}{2}-z \\
	&= \min \left\{  \frac{|T|}{2} - \frac{l_2+l_3}{2}-z,  \frac{|T|}{2}- \left( l_1+l_2 +2z -\frac{|T|}{2} \right) - \frac{l_2+l_3}{2}-z \right\}  \\
	&\geq \min \left\{\frac{l_1+l_2+l_3+4z}{2}- \frac{l_2+l_3}{2}-z, l_1+l_2+l_3+4z-l_1-l_2 -2z  - \frac{l_2+l_3}{2}-z \right\}  \\&=\min \left\{ \frac{l_1}{2}+z,\frac{l_3-l_2}{2}+z \right\} \\&\geq z. 
	\end{align*}
	
	In particular, we are necessarily in the case $x\leq\frac{|T|}{4}-3a$, given that otherwise $r_2< 0\leq z$.
	
	Note that by our earlier assumption $r_2-z \leq r_1-z \leq \frac{l_2+l_3}{2}$. In particular, it is possible to choose $p_1,p_2 \in T[v_2,v_3]$ so that $d_T(p_1,v_2) = r_1-z$ and $d_T(p_2,v_3) = r_2-z$. We first wish to show that $T'\subseteq B_T(p_1,r_1-z) \cup B_T(p_2,r_2-z)$. Informally, we use the smaller radii to cover the end of longest branch of $T'$ and we use the larger  radii to cover the 2nd longest branch of $T'$ (including $v_0$), in order to, as we will see below, maximize the ``overflow'' onto the shortest branch of $T'$.
	
	First, we show that $T[v_2,v_3] \subseteq B_T(p_1,r_1-z) \cup B_T(p_2,r_2-z)$. To establish this it suffices to show that
	$d_T(p_1,p_2) \leq (r_1-z)+(r_2-z).$ By the above remark (that $r_2-z \leq r_1-z \leq \frac{l_2+l_3}{2}$), $p_1$ is closer to $v_2$ than $p_2$ is, and similarly $p_2$ is closer to $v_3$ than $p_1$ is (in other words, these points appear in the order $v_2-p_1-p_2-v_3$ on $T[v_2,v_3]$) and so $d_T(v_2,v_3)=l_2+l_3-(r_2-z)-(r_1-z)$. Therefore,
	\begin{align*}
	d_T(p_1,p_2) - \left((r_1-z)+(r_2-z)\right) &= l_2+l_3 -  2(r_2-z) - 2(r_1-z) \\
	&=
	l_2+l_3 - |T| + 4a +4z
	\\&=  \max \left\{l_2+l_3 -|T|+4z , l_2+l_3 - 2|T| + 2l_1+2l_2 +8z \right\} \\&\leq \max \left\{-l_1, l_2-l_3 \right\} \\&\leq 0,\end{align*}
	as desired.
	
	Next we show that $T[v_1,v_2] \subseteq  B_T(p_1,r_1-z).$ As we already have seen that $v_2 \in B_T(p_1,r_1-z)$, it suffices to show that $d_T(p_1,v_1) \leq r_1-z$. First note that $$2(r_1-z) \geq \frac{|T|}{2}+2a-2z \geq \frac{|T|}{2}+ \left(l_1+l_2+2z -\frac{|T|}{2}\right) -2z = l_1+l_2.$$
	Thus,
	\begin{align*}
		d_T(p_1,v_1)
		&=d_T(p_1,v_0)+d_T(v_0,v_1)\\
		&= |(r_1-z)-l_2| + l_1\\
		&= \max\{r_1-z - (l_2 -l_1), l_1+l_2-(r_1-z)\}\\
		&\leq r_1-z.
	\end{align*}
	
	It follows that $T' = T[v_2,v_3] \cup T[v_1,v_2] \subseteq  B_T(p_1,r_1-z) \cup B(p_2,r_2-z) $. Finally, we show that $T \subseteq  B_T(p_1,r_1) \cup B(p_2,r_2),$ i.e. we show that for every $u \in T$ we have $d_T(u,p_i) \leq r_i$ for some $i \in \{1,2\}.$ We already established this for $u \in T',$ so we may assume that $u \in \bar{T}'.$ Let $v$ be the anchor of the component $T''$ of $\bar{T}'$ containing $u$. Then $v \in T'$ and so $d_T(v,p_i) \leq r_i-z$ for some $i \in \{1,2\}$. Moreover, $d_T(u,v) \leq \dpt(T'') \leq z$, where the last inequality holds by the choice of $T'$ (condition (ii) of \cref{l:z}). Thus $d_T(u,p_i) \leq d_T(u,v) + d_T(v,p_i) \leq r_i$, as desired. 
\end{proof}

In \cite{bastide_improved_2022,omar_burning_2021}, it is used that if there are many leaves in a tree, we can cut them off, burn (obtain a cover of) the remaining subtree and then increment all radii by $1$ to burn the entire tree. Here, we have pushed this idea further by using \cref{l:z} to cut off as much as we need to obtain a subtree with at most $3$ remaining leaves.

\section{Random covers of metric trees}\label{s:random}

In this section we prove a fractional version of theorem \cref{thm:main} for metric trees. To state it we first need to formalize the notion of a random cover of a metric tree and define the necessary parameters of such a cover.

Let $T$ be a metric tree. Endow $\mc{C}(T)$ (the set of all covers of $T$) with the topology of $\sqcup_{m \in \bb{N}} \bb{R}^m_+$. Let $\nu$ be a finite  Borel measure on $\mc{C}(T)$. (Our main focus is the case when $\nu$ is a probability measure.)

The key parameter of interest to us
is the \emph{expectation measure $E\nu$} of $\nu$, which is  a Borel measure on $\bb{R}_+$ defined as follows. For $\bl{r}=(r_1,\ldots,r_m) \in \mc{C}(T)$ and $B \subseteq \bb{R}_+$, let $\#(B,\bl{r})$ denote the number of components  of $\bl{r}$ (i.e. radii) that lie in $B$, i.e. $$\#(B,\bl{r})=|\{ i \: \mid \: 1 \leq i \leq m, r_i \in B\}|.$$
Then, we can define $$E\nu(B)=\int \#(B,\bl{r}) d \nu(\bl{r})$$ for every Borel $B \subseteq \bb{R}_+$.\footnote{By rescaling we can assume that $\nu$ is a probability measure. Consider each $\bl{r}=(r_1,\ldots,r_m) \in \mc{C}(T)$ as a sum of discrete measures  $\sum_{i=1}^m\delta_{r_i}$. Then $\nu$ is a \emph{point process} on $\bb{R}_+$, and $E\nu$ is its \emph{expectation or intensity measure}. It is well-known that $E\nu$ is a well-defined and is indeed a measure, see e.g. \cite[Lemma 1.1.1]{reiss_course_1993}.}
In particular, when $\nu$ is a probability measure, then $E\nu(B)$ is the expected number of radii in a random cover $\bl{r}$ that lie in $B$. Note also that when $\nu$ is concentrated on covers of size $m$ then  $E\nu$ is the sum of $m$ marginals of $\nu$.

Before stating the main result of this section, we need to introduce a few more technical definitions. First, to convert the random covers in to a particular uniform one in the next section, it will be convenient to ensure that the covers we consider are somewhat tame. The precise notion of tameness is given in the next definition.   
For $l \in \bb{R}$, we say that a cover $\bl{r}=(r_1,\ldots,r_m)$ of $T$ is \emph{$l$-good} if $\|\bl{r}\|_1= \sum_{i=1}^m r_i \leq |T|+l$.
Let $\mc{C}(T,l) \subseteq \mc{C}(T)$ denote the set of $l$-good covers of $T$.

Secondly we will want the expectation measure of our distribution on covers to be close to uniform and as a result many of the calculation involve the uniform measures on real intervals.  
For $b  \geq a  \geq 0$, let $\bl{U}[a,b]$ denote the uniform probability (Borel) measure on $[a,b]$. For future reference, the following is useful identity relating the measures $\bl{U}[a,b]$, $\bl{U}[a,c]$ and $\bl{U}[b,c]$ for $c \geq  b \geq a$, $c > a$:
\begin{equation}\label{e:Urelation}
\bl{U}[a,c] = \frac{b-a}{c-a}\bl{U}[a,b] +  \frac{c-b}{c-a}\bl{U}[b,c].
\end{equation}

We are finally ready to state the main result of this section.

\begin{thm}\label{t:main2} If $\eps, r > 0$ and $T$ is a metric tree such that $|T| \geq 24 \eps^{-1}r$, then there exists a probability measure $\nu$ on $\mc{C}(T,r)$  such that $$E\nu \leq (1+\eps)\frac{|T|}{r}\bl{U}[0,r]. \qquad \footnote{For two  measures on the same measure space $\mu_1, \mu_2$ we write  $\mu_1 \leq \mu_2$ if $\mu_1(B) \leq \mu_2(B)$ for every measurable $B$.}$$ 			
\end{thm}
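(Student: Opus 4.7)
The plan is to partition $T$ into $r$-minimal pieces via \cref{l:partition2}, cover each piece using \cref{l:2cover} with a uniformly sampled parameter $x_i$, and then bound the expected measure $E\nu$ by using the identity \eqref{e:Urelation} to rewrite each per-piece contribution in terms of $\bl U[0,r]$. First, I would apply \cref{l:partition2} with $l=r$ to obtain a decomposition $\{T_0,T_1,\ldots,T_k\}$ with $|T_0|\le r$ and each $T_i$ ($i\geq 1$) being $r$-minimal, so $|T_i|\in [r,2r]$. Since $|T|\geq 24\eps^{-1}r$, we have $|T_0|/|T|\le \eps/24$, so $T_0$ can be covered by a single ball whose radius is sampled (for example) uniformly from $[\diam(T_0)/2,r]$ so that its contribution to $E\nu$ is absolutely continuous and at most an $O(\eps)$-fraction of the target. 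For $i\geq 1$, \cref{l:2cover} produces $a_i\in[0,\min\{r/2-|T_i|/4,|T_i|/12\}]$ and parameters $u_i=|T_i|/4-3a_i$, $v_i=|T_i|/4+a_i$, and covers either of the form $(u_i-x_i,v_i+x_i)$ for $x_i\in[0,u_i]$ or $(v_i+x_i)$ for $x_i\in[u_i,r-v_i]$. I would sample the $x_i$ independently and uniformly on $[0,r-v_i]$; each sample is $r$-good, as the sum of radii for piece $T_i$ is at most $\max(|T_i|/2,r)\le r$, yielding a total over all pieces at most $|T|+r$.

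A direct computation using \eqref{e:Urelation} (applied with endpoints $0,u_i,v_i,r$ to express $\bl U[0,u_i]$ in terms of $\bl U[0,r]$, $\bl U[u_i,v_i]$ and $\bl U[v_i,r]$) yields the compact representation
\[
E\nu_i \;=\; \frac{r}{r-v_i}\,\bl U[0,r] \;-\; \frac{4a_i}{r-v_i}\,\bl U[u_i,v_i],
\]
and summing,
\[
E\nu \;=\; \Bigl(\sum_{i=1}^{k}\tfrac{r}{r-v_i}\Bigr)\bl U[0,r] \;-\; \sum_{i=1}^{k}\tfrac{4a_i}{r-v_i}\bl U[u_i,v_i] \;+\; E\nu_0.
\]
The target inequality $E\nu\le(1+\eps)(|T|/r)\bl U[0,r]$ thus reduces to showing that the aggregated negative correction from the gaps $(u_i,v_i)$ absorbs the scalar excess $\sum_i r/(r-v_i)-(1+\eps)|T|/r$, pointwise on $[0,r]$.

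The main obstacle is arranging this absorption. For pieces of size close to $2r$ with $a_i\approx 0$, one has $r/(r-v_i)\approx|T_i|/r$ and no excess is created; however, pieces of size close to $r$ with maximal $a_i$ contribute $r/(r-v_i)=3/2$ while $|T_i|/r=1$, creating an excess of $1/2$ per piece that the gap terms need to absorb on $[0,r]\setminus[r/3,r]$ where those same pieces contribute nothing. The key observation in my favor is that the pieces with large $a_i$ (those responsible for the excess) also have large gaps $(u_i,v_i)$ of length $4a_i$, and the density of their gap term matches exactly the naive overshoot density $1/(r-v_i)$. The hardest part of the proof will be designing the partition—by iteratively refining the output of \cref{l:partition2}, merging small adjacent pieces, re-splitting when necessary, or using a random shift/partition—so that the gaps of the small pieces are spread out enough to cover the excess at every $t\in[0,r]$. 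The hypothesis $|T|\geq 24\eps^{-1}r$ guarantees many pieces, allowing the averaging that makes such a pointwise cancellation possible up to a factor of $1+\eps$.
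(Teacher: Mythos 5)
There is a genuine gap, and it sits exactly where you flag it: the ``absorption'' step cannot work, and no redesign of the partition can save the one-level scheme. In your construction each piece $T_i$ contributes, near the top of the interval, density exactly $\frac{1}{r-v_i}$ on $(v_i,r]$ with $v_i=\frac{|T_i|}{4}+a_i$, while its fair share of the target density is $\frac{|T_i|}{r^2}$. Since $|T_i|\le 2r$ and $a_i\ge 0$, one has $|T_i|(r-v_i)\le |T_i|\bigl(r-\tfrac{|T_i|}{4}\bigr)\le r^2$, with equality only when $|T_i|=2r$ \emph{and} $a_i=0$; moreover every gap $[u_i,v_i]$ lies in $[0,\tfrac{2r}{3}]$, so no piece ever has a deficit on $(\tfrac{2r}{3},r]$. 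Hence for decompositions with pieces of length near $r$, or pieces forced to have $a_i>0$ (e.g.\ a tripod with three equal legs has $a_i=\tfrac{|T_i|}{12}$, giving density $\tfrac{1}{r-|T_i|/3}$ near $r$ versus target $\tfrac{|T_i|}{r^2}$ --- a $50\%$ overshoot when $|T_i|=2r$), the summed density on the top of $[0,r]$ exceeds $\frac{|T|}{r^2}$ by a constant factor, violating $E\nu\le(1+\eps)\frac{|T|}{r}\bl{U}[0,r]$ for small $\eps$. The signed identity $E\nu_i=\frac{r}{r-v_i}\bl{U}[0,r]-\frac{4a_i}{r-v_i}\bl{U}[u_i,v_i]$ is algebraically correct but irrelevant: the negative term only removes mass on $[u_i,v_i]$ and cannot lower the density elsewhere, and since $a_i$ is dictated by the internal structure of $T_i$ (via \cref{l:z}), not chosen, ``merging, re-splitting, or random shifts'' of the partition cannot eliminate the excess at radii above $\tfrac{2r}{3}$.

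The paper's proof resolves exactly this obstacle by a mechanism your proposal lacks: induction on $\eps$ combined with convex mixing of measures, rather than a single concatenation. The decomposition-plus-\cref{l:2cover} step is only asked to produce an $(r,\delta)$-controlled measure $\nu_0$ with $E\nu_0=\sum_i\alpha_i\bl{U}[a_i,r]$ and $\m(E\nu_0)\le(1+\delta)|T|+2r$ (\cref{l:controlled1,l:controlled2}, with $l=2(1-\delta)r$ rather than $r$). The missing mass on $[0,a_i]$ is then supplied by applying the theorem recursively to \emph{all of} $T$ with the smaller radius $a_i$, and the final measure is the mixture $\nu=\sum_i p_i\nu_i$ of \cref{l:nuBasics}(c); the weight $p_0<1$ is what scales down the over-concentration of $\nu_0$ near $r$, something no nonnegative per-piece correction can do, and the hypothesis $|T|\ge 24\eps^{-1}r$ is used to make the bookkeeping inequality \eqref{e:ai1} close. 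So the decomposition and the use of \cref{l:2cover} with a uniform parameter match the paper's first step, but the heart of the proof --- the recursion in $\eps$ and the mixing that turns $\sum_i\alpha_i\bl{U}[a_i,r]$ into a multiple of $\bl{U}[0,r]$ --- is absent from your outline, and without it the stated bound is false for the construction you describe.
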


Informally, \cref{t:main2} implies that if $T$ is large enough compared to $r$ then there exists a distribution on ($r$-good) covers of $T$ that uses only radii in $[0,r]$, uses all such radii approximately equally often, and moreover the expected sum of the radii in a cover is not much larger than $\frac{|T|}{2}$.\footnote{The last property might not be obvious from the statement, but will be made clearer by subsequent calculations. Note moreover that if $T$ is an interval then the sum of the radii in every cover of $T$ is at least $\frac{|T|}{2}$ so this property and  the coefficient $(1+\eps)\frac{|T|}{r}$ in the theorem statement can not be improved, except for eliminating the $\eps$ error term.}

The rest of the section is occupied by the proof of \cref{t:main2} starting with introducing additional notation.

For a  Borel measure $\mu$ on $\bb{R}_{+}$, let $$\m(\mu)=2 \int x d\mu(x),$$ i.e. $\m(\mu)$ is the first moment of $\mu$ rescaled for convenience by a factor of two. If  $\nu$ is a probability measure on covers $\mc{C}(T)$ then 
$\m(E\nu)$ is twice the expected value of the sum of radii in a cover chosen according to $\nu$, i.e. the expected
maximum length of an interval that can be covered by such a cover. Due to this property we use $\m(E\nu)$ to keep track of the ``quality'' of the distribution $\nu.$ 

Note that $\m$ is a linear map from the  space of  Borel measures on $\bb{R}_{+}$ to $\bb{R}_+$ and that \begin{equation}\label{e:mU}\m(\bl{U}[a,b]) = b+a.\end{equation}

We prove of \cref{t:main2} iteratively for smaller and smaller $\eps$. A single iteration hinges on us finding a probability measure on $\mathcal C(T)$ that can be complemented by others with expectation measures of the form $c_i\bl{U}[0,a_i]$  for $a_i < r$ to produce the desired measure with expectation roughly uniform on the interval $[0,r]$.

The following definition makes the properties we need precise.
For $r, \delta > 0$, we say that a probability measure $\nu$ on $\mc{C}(T)$ is \emph{$(r,\delta)$-controlled} if there exist $\alpha_1,\ldots,\alpha_k \geq 0$ and $a_1,a_2,\ldots,a_k \in [0, (1-\delta)r]$  such that $$E\nu =\sum_{i=1}^{k}\alpha_i\bl{U}[a_i,r],$$

With the main definitions in place,
we collect all the basic properties of measures on $\mc{C}(T,l)$ that we need in the following lemma.

\begin{lem}\label{l:nuBasics} Let $T$ be a metric tree. 
	\begin{description}
		\item[(a)] Let $f_1,f_2,\ldots,f_m: [0,1] \to \bb{R}_+$ be affine functions such that $(f_1(x),f_2(x),\ldots,f_m(x))$ is a cover of $T$ for every $x \in [0,1]$.
		Let $a_i = \min\{f_i(0),f_i(1)\}$ and $b_i=\max\{f_i(0),f_i(1)\}$ for $i=1,\ldots,m$ and let $l=\max\{\sum_{i=1}^{m}f_i(0),\sum_{i=1}^{m}f_i(1)\}-|T|.$
		Then there exists a probability measure on $\nu$ on $\mc{C}(T,l)$ such that \begin{equation}\label{e:linear}E\nu=\sum_{i=1}^m\bl{U}[a_i,b_i].\end{equation}
		\item[(b)] Let $\{T_1,\ldots,T_k\}$ be a decomposition of a tree $T$. Let $r,\delta,l_1,\ldots,l_k \geq 0$ be such that for every $1 \leq i \leq k$ there exists an $(r,\delta)$-controlled  probability  measure  $\nu_i$ on $\mc{C}(T_i,l_i)$. Then there exists an $(r,\delta)$-controlled  probability  measure  $\nu$ on $\mc{C}(T,\sum_{i=1}^{k}l_i)$, such that
	\begin{equation}\label{e:basicB} \m(E\nu) = \sum_{i=1}^{k}\m(E\nu_i).\end{equation}
		\item[(c)] Let $l\geq 0$, let $\nu_0,\nu_1,\ldots,\nu_k$ be probability  measures on $C(T,l)$ and let $p_0,\ldots,p_k \geq 0$ be such that $\sum_{i=0}^k p_i=1$. Then there exists a probability measure $\nu$ on $C(T,l)$ such that
			\begin{equation}\label{e:basicC} E\nu=\sum_{i=0}^{k}p_i \cdot E\nu_i.\end{equation}
	\end{description}
	
\end{lem}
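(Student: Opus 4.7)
For (a), the natural construction is to take $\nu$ to be the law of the random cover $(f_1(X),\ldots,f_m(X))$ where $X$ is uniform on $[0,1]$, i.e.\ the pushforward of Lebesgue measure on $[0,1]$ under the map $x \mapsto (f_1(x),\ldots,f_m(x))$. Because $\sum_i f_i$ is affine on $[0,1]$ its maximum is attained at an endpoint, so $\sum_i f_i(x) \leq \max\{\sum_i f_i(0),\sum_i f_i(1)\}=|T|+l$ for every $x$, giving $l$-goodness. For the expectation, unfolding the definition and using linearity gives
\[
E\nu(B) = \int_0^1 \sum_{i=1}^m \mathbf{1}[f_i(x) \in B]\,dx = \sum_{i=1}^m \mathrm{Leb}\{x \in [0,1] : f_i(x) \in B\},
\]
and for each affine $f_i$ the pushforward of Lebesgue under $f_i$ is precisely $\bl{U}[a_i,b_i]$ (with the natural convention $\bl{U}[c,c]=\delta_c$ when $a_i=b_i$), which yields \eqref{e:linear}.

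For (b), I would take the product measure $\nu_1 \otimes \cdots \otimes \nu_k$ on $\prod_i \mc{C}(T_i,l_i)$ and push it forward under the concatenation map sending a tuple of covers $(\bl{r}_1,\ldots,\bl{r}_k)$ to the single tuple obtained by concatenating their entries. Any such concatenation is a cover of $T = \bigcup_i T_i$ (since $\{T_i\}$ is a decomposition), and from $|T| = \sum_i |T_i|$ combined with the $l_i$-goodness of each factor one obtains $(\sum_i l_i)$-goodness. The expectation measure decomposes additively, $E\nu = \sum_{i=1}^k E\nu_i$, because $\#(B,\cdot)$ is additive under concatenation and the factors are independent. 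Writing each $E\nu_i = \sum_j \alpha_{ij}\bl{U}[a_{ij},r]$ with $a_{ij} \in [0,(1-\delta)r]$ then immediately gives an expression of the same shape for $E\nu$, so $\nu$ is $(r,\delta)$-controlled, and linearity of $\m$ gives \eqref{e:basicB}.

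For (c), one simply sets $\nu = \sum_{i=0}^k p_i\nu_i$; this is a probability measure supported on $\mc{C}(T,l)$ (as each $\nu_i$ is), and \eqref{e:basicC} follows from linearity of the integral against the counting function $\#(B,\cdot)$. The only mild friction throughout is the measure-theoretic bookkeeping on the disjoint-union topology $\mc{C}(T) = \sqcup_m \bb{R}_+^m$: one needs to verify measurability of the pushforward maps in (a) and (b), which is routine, and to handle the degenerate affine case $a_i=b_i$ in (a) via the convention $\bl{U}[c,c]=\delta_c$. None of the three parts involves any real obstacle; the lemma is essentially a catalogue of the three elementary operations (parametrized family, independent product, convex combination) that we will build upon in the subsequent sections.
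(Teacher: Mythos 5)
Your proposal is correct and follows essentially the same route as the paper: the pushforward of $\bl{U}[0,1]$ under $x\mapsto(f_1(x),\ldots,f_m(x))$ for (a), the pushforward of the product measure under concatenation for (b), and the mixture $\sum_i p_i\nu_i$ for (c). Your explicit check of $l$-goodness via the affine sum attaining its maximum at an endpoint, and your handling of the degenerate constant case $a_i=b_i$, are small points the paper leaves implicit.
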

\begin{proof}
	\textbf{(a):} Let the map $F: [0,1] \to \mc{C}(T,l)$ be defined by 
	$$F(x)=(f_1(x),f_2(x),\ldots,f_m(x)).$$
	As $F$ is continuous it is Borel measurable and we can define
 $\nu = \bl{U}[0,1] \circ F^{-1}$ to be the image measure of the uniform probability measure on $[0,1]$ under the map $F$.\footnote{I.e. for every Borel $B \subseteq \bb{R}^m$ we have $\nu(B)=(\bl{U}[0,1])(F^{-1}(B))$} Then $\nu$ is a probability  measure on $\mc{C}(T,l)$ and the marginals $\nu_1,\ldots,\nu_i$ of $\nu$ satisfy $\nu_i= \bl{U}[0,1] \circ f_i^{-1}$. As the expectation measure of $\nu$ is the sum of its marginals we have \begin{equation}\label{e:linear0}E\nu =\sum_{i=1}^m \s{\bl{U}[0,1] \circ f^{-1}_i}.\end{equation}
As $f_i$ is an affine bijection from $[0,1]$ to $[a_i,b_i]$ we have $\bl{U}[0,1] \circ f^{-1}_i = \bl{U}[a_i,b_i]$, and so \eqref{e:linear0} implies \eqref{e:linear}, as desired.

\textbf{(b):} Let $\bl{r}^i=(r^i_1,\ldots,r^i_{m_i}) \in \mc{C}(T_i,l_i)$ for $i=1,\ldots,k$. Define \begin{align*}
\bl{R}&=\bl{R}(\bl{r}^1,\ldots,\bl{r}^k) \\&= (r^1_1,\ldots,r^1_{m_1},r^2_1,\ldots,r^2_{m_2},\ldots,r^k_1,\ldots,r^k_{m_k})
\end{align*}
to be the concatenation of these covers. Then $\bl{R}$ is a cover of $T$ as there exist balls of radii
$r^i_1,\ldots,r^i_{m_i}$ whose union includes $T_i$ for every $i$, and so the union of such balls over all $i$ is $T$. Note also that $$\|\bl{R}\|_1 = \sum_{i=1}^{k}\|\bl{r}^i\|_1 \leq  \sum_{i=1}^{k}(|T_i|+l_i) = |T|+  \sum_{i=1}^{k}l_i,$$
and so $\bl{R}$ is $(\sum_{i=1}^{k}l_i)$-good.

Thus
$\bl{R}$ is a Borel measurable map from $\prod_{i=1}^k \mc{C}(T_i,l_i)$ to   $\mc{C}(T,\sum_{i=1}^{k}l_i)$, and we define $\nu =  (\otimes_{i=1}^k\nu_i) \circ \bl{R}^{-1}$. That is, $\nu$ is the probability measure on $(\sum_{i=1}^{k}l_i)$-good covers of $T$ obtained by  taking the union (more formally, a concatenation) of $l_i$-good covers of $T_i$  chosen for $i=1,\ldots,k$ independently at random according to the probability  measure $\nu_i$.
Then $E\nu = \sum_{i=1}^kE\nu_i$, implying that $\nu$ is $(r,\delta)$-controlled as each $\nu_i$ is, and implying \eqref{e:basicB} by linearity of $\m(\cdot)$.
 
\textbf{(c):} Let $\nu= \sum_{i=0}^{k}p_i \nu_i$. That is $\nu$ the probability measure on $l$-good covers of $T$ obtained by randomly choosing an index $\{0,\dots,k\}$ with $i$ chosen with probability $p_i$ and then choosing a 
cover of $T$ according to the probability  measure $\nu_i$.  The identity \eqref{e:basicB} holds as the expectation measure is linear.
\end{proof}

Our next two lemmas establishes existence of a measure that is needed to perform a single iteration in the proof of \cref{t:main2} as outlined above. The first lemma finds a measure on covers of a single part of an appropriate decomposition of $T$, and the second combines the measures for each part of the decomposition.
 
\begin{lem}\label{l:controlled1} If $0 < r$, $0 < \delta \leq \frac{1}{2}$ are real and $T$ is a $2(1-\delta)r$-minimal metric tree, then there exists an  $(r,\delta)$-controlled probability measure $\nu$ on $\mc{C}(T,0)$ such that $$\m(E\nu) \leq |T| + \delta r.$$
\end{lem}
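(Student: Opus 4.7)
The plan is to apply Lemma \ref{l:2cover} to $T$ with $l = 2(1-\delta)r$, obtaining a parameter $a \geq 0$ satisfying $a \leq \min\{(1-\delta)r - |T|/4, |T|/12\}$, along with the associated two-radius cover family $(A - x, B + x)$ for $x \in [0, A]$ (where $A = |T|/4 - 3a$, $B = |T|/4 + a$) and single-radius covers $(R)$ for $R \geq A + B$. Note that the constraint on $a$ already gives $B \leq (1-\delta)r$, which will be crucial for keeping the parameters $a_i$ of the target form in the admissible range $[0,(1-\delta)r]$.

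I construct $\nu$ as a convex combination (via Lemma \ref{l:nuBasics}(c)) of two probability measures built from these cover families via Lemma \ref{l:nuBasics}(a): $\nu_1$, supported on two-radius covers with $x$ uniform on a subinterval $[0,X] \subseteq [0,A]$, yielding $E\nu_1 = \bl{U}[A-X,A] + \bl{U}[B,B+X]$; and $\nu_2$, supported on single-radius covers $(R)$ with $R$ uniform on a subinterval of $[A+B,r]$ (when $A+B \leq r$), yielding a single uniform piece $\bl{U}[Y_1,r]$. The mixing weights $p_1, p_2$ and the parameter ranges $X, Y_1$ are then chosen so that the combined expectation measure $E\nu = p_1 E\nu_1 + p_2 E\nu_2$ can be re-expressed, via the identity \eqref{e:Urelation}, as $\sum_i \alpha_i \bl{U}[a_i,r]$ with $a_i \in [0,(1-\delta)r]$ and $\alpha_i \geq 0$. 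The verification of $\m(E\nu) \leq |T| + \delta r$ is then a direct calculation from the identities $\m(E\nu_1) = 2(A+B) = |T| - 4a$ and $\m(\bl{U}[c_1,c_2]) = c_1 + c_2$, together with the constraint $p_1+p_2=1$.

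The main obstacle I expect is the case $a > 0$: then $E\nu_1$ has density $1/X$ on $[A-X,A]$ and on $[B,B+X]$, but is zero on the interior gap $[A,B]$. Since the density of $\sum_i \alpha_i \bl{U}[a_i,r]$ is the partial sum $\sum_{a_i \leq t}\alpha_i/(r-a_i)$, a non-decreasing step function on $[0,r]$ with jumps only at the $a_i$, the interior gap is incompatible with the target form (the initial zero region before $\min_i a_i$ is allowed, but an interior drop is not). To handle this, I will either choose $X \leq A$ with an atom $a_1 \geq A - X$ so that the only zero region lies at the start (permitted), or introduce additional cover families to fill the gap. The monotonicity of the cover property—enlarging any radius of a cover preserves the cover property—combined with Lemma \ref{l:2cover} shows that every $(r_1,r_2)$ with $\max(r_1,r_2)\geq B$ and $r_1+r_2 \geq A+B$ is a valid cover; this provides covers with $r_1 \in [A,B]$ whose contribution to the expectation measure can be used to smooth out the gap. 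Combining these gap-filling families with the original $\nu_1, \nu_2$, with weights tuned so all step jumps of the resulting density are non-negative, will yield the desired $(r,\delta)$-controlled measure.
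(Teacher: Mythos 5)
Your plan assembles the right ingredients (apply \cref{l:2cover} with $l=2(1-\delta)r$, build one-parameter affine families of covers via \cref{l:nuBasics}(a), mix them via \cref{l:nuBasics}(c), and check that the density of $E\nu$ is a non-decreasing step function with all jumps in $[0,(1-\delta)r]$), and you correctly diagnose the central obstacle. But the resolution of that obstacle is the entire content of the lemma, and your proposal defers it to an unverified feasibility claim (``weights tuned so all step jumps of the resulting density are non-negative''). Two concrete problems with the specific families you start from. First, your $\nu_2$ contributes $\bl{U}[Y_1,r]$ with $Y_1\geq A+B=\frac{|T|}{2}-2a$. The definition of $(r,\delta)$-controlled forces the density of $E\nu$ to be constant on $[(1-\delta)r,r]$; so whenever $\frac{|T|}{2}-2a>(1-\delta)r$ --- which is exactly the regime where single balls of radius $(1-\delta)r$ do \emph{not} cover $T$ and two-ball covers are genuinely needed --- the upward jump of $\bl{U}[Y_1,r]$ at $Y_1>(1-\delta)r$ must be exactly offset by other families whose pieces end at $Y_1$, and nothing in the proposal arranges this. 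Second, the gap-filling family $\bl{U}[A,B]+\bl{U}[B,r]$ has $\m=\frac{3|T|}{4}-a+r$, which exceeds the budget $|T|+\delta r$ except in the degenerate case $\frac{|T|}{4}+a=(1-\delta)r$ (recall $\frac{|T|}{4}+a\le(1-\delta)r$ always), so it can carry only limited weight. Whether the three constraints --- monotone density, jump locations in $[0,(1-\delta)r]$, and $\m(E\nu)\le|T|+\delta r$ --- can be satisfied simultaneously is precisely what must be proved, and the proposal does not prove it.

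The paper avoids patching $\bl{U}[A-X,A]+\bl{U}[B,B+X]$ altogether. It splits into cases according to how $\frac{|T|}{2}-2a$ compares with $(1-\delta)r$ and $r$. If $\frac{|T|}{2}-2a\le(1-\delta)r$, it uses only single balls with radius uniform on $[(1-\delta)r,r]$, giving $E\nu=\bl{U}[(1-\delta)r,r]$ and $\m=(2-\delta)r=l+\delta r\le|T|+\delta r$. Otherwise it keeps the second radius sweeping all of $\left[\frac{|T|}{4}+a,\,r\right]$ and \emph{enlarges} the first radius to an affine function decreasing from $\frac{|T|}{4}+a$ down to $\max\left\{0,\frac{|T|}{2}-r-2a\right\}$ (rather than from $A$ down to $A-X$), so that $E\nu=\bl{U}\left[c,\frac{|T|}{4}+a\right]+\bl{U}\left[\frac{|T|}{4}+a,r\right]$ consists of two contiguous pieces meeting at $B=\frac{|T|}{4}+a$ with the first piece wide enough that the density is non-decreasing; identity \eqref{e:Urelation} then rewrites this as a non-negative combination of $\bl{U}[c,r]$ and $\bl{U}[B,r]$, and the moment bound is a short computation in each case. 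I would rework your construction along these lines; as it stands the proof has a genuine gap at its key step.
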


\begin{proof}  
	Let $l=2(1-\delta)r$. Let $0 \leq a \leq \frac{l}{2}-\frac{|T|}{4}$ be as in \cref{l:2cover}. Then $|T| \geq l \geq \frac{|T|}{2}+2a$.
	
	Suppose first that $\frac{|T|}{2}-2a \leq (1-\delta)r$. By \cref{l:2cover} we have that $$\s{ \frac{|T|}{4}+a+x
		}$$ is a cover of $T$ for 
		every $x \geq \frac{|T|}{4}-3a$. In other words, $(x')$ is a cover of $T$ for every $x'\geq \frac{|T|}{4}+a+\left(\frac{|T|}{4}-3a\right)=\frac{|T|}{2}-2a$. In particular, this holds if $x' \in [(1-\delta)r,r]$ by our previous assumption. Hence, by \cref{l:nuBasics}(a) there exists a probability measure $\nu$ on $\mc{C}(T,r-|T|)$ such that
	$$E\nu = \bl{U}[(1-\delta)r,r].$$ Given that $|T|\geq 2(1-\delta)r\geq r$ we have that $\mc{C}(T,r-|T|)\subseteq \mc{C}(T,0)$ and so $\nu$ is a probability distribution on $\mc{C}(T,0)$.
	
	It is direct from the definition that $\nu$ is $(r,\delta)$-controlled (take $k=1,\alpha_1=1,a_1=(1-\delta)r$).  
	Using \eqref{e:mU}, we have $$\m(E\nu) =  (2-\delta)r = l +\delta r \leq |T|+ \delta r.$$
	It follows that  $\nu$ satisfies the conditions of the lemma.
	
	Thus we assume $\frac{|T|}{2}-2a \geq (1-\delta)r$. For $y \in [0,1]$, \begin{equation}\label{e:cover0}\s{\max\left\{0,\frac{|T|}{4}-3a-y\s{r-\frac{|T|}{4}-a}\right\},\frac{|T|}{4}+a+y\s{r-\frac{|T|}{4}-a}}\end{equation} is a cover of $T$ by \cref{l:2cover} applied with $x = y\s{r-\frac{|T|}{4}-a}$. Note that as $r \geq \frac{l}{2} \geq \frac{|T|}{4} + a$, we indeed have $x \geq 0$.
	
	Suppose further that $\frac{|T|}{2}-2a \geq r$.
	By increasing the first component , we further deduce that 
	 $$\s{\frac{|T|}{4}+a-y\s{r-\frac{|T|}{4}+3a},\frac{|T|}{4}+a+y\s{r-\frac{|T|}{4}-a}}$$
	 is a cover of $T$ for every such $y$. That this is actually an increase of the first radii is a consequence of the last supposition (to show that the new first radii is non-negative) and of the fact that $y\leq 1$ (for showing the inequality in the second case). 
	 
   	By \cref{l:nuBasics}(a) there exists a probability distribution $\nu$ on $\mc{C}\left(T,\frac{|T|}{2}+2a - |T|\right)$ such that
\begin{equation}\label{e:cover1} E\nu =  \bl{U}\left[\frac{|T|}{2}-r-2a, \frac{|T|}{4}+a\right]+\bl{U}\left[\frac{|T|}{4}+a,r\right].\end{equation}
As $\frac{|T|}{2}+2a \leq  l \leq |T|$, $\mc{C}\left(T,\frac{|T|}{2}+2a - |T|\right) \subseteq \mc{C}(T,0) $ and so $\nu$ is a probability distribution on $\mc{C}(T,0) $. 
Using \eqref{e:Urelation} we have $$\bl{U}\left[\frac{|T|}{2}-r-2a,r\right] = \frac{r -\frac{|T|}{4}+3a}{2r- \frac{|T|}{2}+2a}\bl{U}\left[\frac{|T|}{2}-r-2a, \frac{|T|}{4}+a\right] + \frac{r -\frac{|T|}{4}-a}{2r- \frac{|T|}{2}+2a}\bl{U}\left[\frac{|T|}{4}+a,r\right],$$ so we can rewrite \eqref{e:cover1} as
$$ E\nu  =  \frac{2r- \frac{|T|}{2}+2a}{r -\frac{|T|}{4}+3a}\bl{U}\left[\frac{|T|}{2}-r-2a,r\right] + \frac{4a}{r -\frac{|T|}{4}+3a}\bl{U}\left[\frac{|T|}{4}+a,r\right]. $$
As $0  \leq \frac{|T|}{2}-r-2a \leq \frac{|T|}{4}+a \leq \frac{l}{2} =  (1-\delta)r$, it follows that $\nu$ is $(r,\delta)$-controlled. 
Using \eqref{e:mU} and \eqref{e:cover1}, we have 
	$$\m(E\nu) = \s{\s{\frac{|T|}{2}-r-2a}+\s{\frac{|T|}{4}+a}}+\s{\s{\frac{|T|}{4}+a}+r}  = |T|.$$
Thus $\nu$ satisfies the lemma in this case.
	
	It remains to consider the case  $(1-\delta)r \geq \frac{|T|}{2}-2a \leq r$. By increasing the first component in the cover \eqref{e:cover0} in a different way, we see that $$\s{\frac{|T|}{4}+a - y\s{\frac{|T|}{4}+a},\frac{|T|}{4}+a+y\s{r-\frac{|T|}{4}-a}}$$
	is a cover of $T$ for every $y \in [0,1]$. Verifying that the new first radii is non-negative is direct, and furthermore this new radii is indeed an increase in the other case as a consequence of $\frac{|T|}{2}-2a \leq r$.
	As \begin{align*}\max_{y \in \{0,1\}}&\s{\frac{|T|}{4}+a - y\s{\frac{|T|}{4}+a}+\frac{|T|}{4}+a+y\s{r-\frac{|T|}{4}-a}} = \max\left
	\{r, \frac{|T|}{2}+2a\right\} \leq l \leq |T|,
\end{align*}
	by \cref{l:nuBasics}(a) there exists a probability distribution $\nu$ on $\mc{C}(T,0)$ such that
	\begin{equation}\label{e:cover2}E\nu =  \bl{U}{\left[0, \frac{|T|}{4}+a\right]}+\bl{U}{\left[\frac{|T|}{4}+a,r\right]}.\end{equation}
	As in the previous case, we can use \eqref{e:Urelation} to get
	$$\bl{U}\left[0,r\right] = \frac{\frac{|T|}{4}+a}{r}\bl{U}\left[0,\frac{|T|}{4}+a \right] + \frac{r-\frac{|T|}{4}-a}{r}\bl{U}\left[\frac{|T|}{4}+a ,r\right],$$
	we rewrite \eqref{e:cover2} as  $$E\nu = \frac{r}{\frac{|T|}{4}+a}\bl{U}{[0, r]}+ \frac{\frac{|T|}{2}+2a-r}{\frac{|T|}{4}+a}\bl{U}{\left[\frac{|T|}{4}+a,r\right]}.$$
	As observed earlier $\frac{|T|}{4}+a \leq  (1-\delta)r$ and so $\nu$ is $(r,\delta)$-controlled. Finally, using \eqref{e:mU} and \eqref{e:cover2}, we have 
$$\m(E\nu) = r + \frac{|T|}{2}+2a  \leq |T| + \delta r$$
	 and $\nu$ satisfies the lemma in this last case. 
\end{proof}

\begin{lem}\label{l:controlled2}
	For every $0< \delta \leq 1/2$, every $r > 0$ and every metric tree $T$ there exists an $(r,\delta)$-controlled  probability  measure  $\nu$ on $\mc{C}(T,r)$ such that
$$\m(E\nu)  \leq (1+\delta)|T| + 2r.$$
\end{lem}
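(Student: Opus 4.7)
The plan is to split by the size of $T$ relative to the threshold $l := 2(1-\delta)r$ and then apply Lemma \ref{l:controlled1} to the large pieces of a suitable decomposition. First, I would handle the base case $|T| \leq l$ as follows: here $\diam(T) \leq |T| \leq 2(1-\delta)r$, so any ball of radius at least $(1-\delta)r$ covers $T$ by Lemma \ref{l:diam}. Applying Lemma \ref{l:nuBasics}(a) to the single affine function $f_1(y) = (1-\delta)r + y\delta r$ on $[0,1]$ produces a probability measure $\nu$ on $\mathcal{C}(T, r-|T|) \subseteq \mathcal{C}(T, r)$ with $E\nu = \bl{U}[(1-\delta)r, r]$. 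This is $(r,\delta)$-controlled by definition (with $k=1$, $\alpha_1 = 1$, $a_1 = (1-\delta)r$) and has $\m(E\nu) = (2-\delta)r \leq 2r$, comfortably within the target bound.

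For $|T| > l$, invoke Lemma \ref{l:partition2} to get a decomposition $\{T_0, T_1, \ldots, T_k\}$ of $T$ with $|T_0| \leq l$ and each $T_i$ for $i \geq 1$ being $l$-minimal. Build a measure on $T_0$ by the single-ball construction of the previous paragraph, and on each $T_i$ ($i \geq 1$) invoke Lemma \ref{l:controlled1} with the very choice $l = 2(1-\delta)r$ to obtain an $(r,\delta)$-controlled measure $\nu_i$ on $\mathcal{C}(T_i, 0)$ with $\m(E\nu_i) \leq |T_i| + \delta r$. Finally, combine all of these using Lemma \ref{l:nuBasics}(b); this yields an $(r,\delta)$-controlled measure $\nu$ on $\mathcal{C}(T, r-|T_0|) \subseteq \mathcal{C}(T, r)$ with $\m(E\nu) \leq 2r + \sum_{i=1}^k (|T_i| + \delta r)$ by linearity of $\m$.

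The only point that requires thought is the final bookkeeping: converting the per-piece error $\delta r$ into an aggregate $\delta |T|$. This reduces to the pointwise inequality $|T_i| + \delta r \leq (1+\delta)|T_i|$, which holds iff $|T_i| \geq r$. Since $T_i$ is $l$-minimal we have $|T_i| \geq l = 2(1-\delta)r$, and this is at least $r$ precisely when $\delta \leq 1/2$ — this is the one place the hypothesis on $\delta$ is used. Granted this, $\sum_{i=1}^k (|T_i| + \delta r) \leq (1+\delta) \sum_{i=1}^k |T_i| \leq (1+\delta)|T|$, and combining with the $2r$ contribution from $T_0$ delivers the target bound $\m(E\nu) \leq (1+\delta)|T| + 2r$.
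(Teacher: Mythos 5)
Your proposal is correct and follows essentially the same route as the paper: decompose via \cref{l:partition2}, apply \cref{l:controlled1} to the $l$-minimal parts with $l=2(1-\delta)r$, put the uniform measure $\bl{U}[(1-\delta)r,r]$ on single-ball covers of $T_0$ via \cref{l:diam} and \cref{l:nuBasics}(a), combine with \cref{l:nuBasics}(b), and use $\delta\leq 1/2$ (i.e. $|T_i|\geq l\geq r$) to absorb the per-piece error $\delta r$ into $\delta|T|$, exactly as in the paper's bookkeeping $k\delta r\leq \delta|T|$. The only cosmetic point is that when invoking \cref{l:nuBasics}(b) you should take $l_0=r$ (or $0$) rather than $r-|T_0|$, which may be negative and hence outside the stated hypotheses of that lemma; the conclusion $\nu$ on $\mc{C}(T,r)$ is unaffected.
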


\begin{proof}
	Let $l=2(1-\delta)r \geq r$. By \cref{l:partition2} there exists a decomposition $\{T_0,T_1,\ldots,T_k\}$ of $T$ such that $|T_0| \leq l$, and $T_1,\ldots,T_k$ are $l$-minimal. Note that
	$$|T| \geq \sum_{i=1}^k |T_i| \geq kl \geq kr. $$
	
	By \cref{l:controlled1}, for every $i \in [k]$  there exists an $(r,\delta)$-controlled distribution $\nu_i$ on $\mc{C}(T_i,0)$ such that $\m(E\nu_i) \leq |T_i|+\delta r$.
	
	Meanwhile, by \cref{l:diam}, $\diam(T_0) \leq \frac{|T_0|}{2} \leq (1-\delta)r$ and so $(x)$ is a cover of $T_0$ for every $x\geq (1-\delta)r$, and in particular for $x\in [(1-\delta)r,r]$. Applying \cref{l:nuBasics}(a), there exists a probability measure $\nu_0$ on $\mc{C}(T_0,r-|T_0|)\subseteq \mc{C}(T_0,r)$ such that $E\nu_0 = \bl{U}[(1-\delta)r,r]$, and so $\nu_0$ is $(r,\delta)$-controlled and $\m(E\nu_0) \leq (2-\delta)r$.
	
	Then, by \cref{l:nuBasics}(b) there exists an $(r,\delta)$-controlled  probability  measure  $\nu$ on $\mc{C}(T,r)$,
	such that $$\m(E\nu) \leq  \sum_{i=1}^k |T_i| + k\delta r  + (2-\delta)r \leq |T| +  k\delta r  + 2r \leq (1+\delta)|T| + 2r,$$
	as desired.
\end{proof}

With all the ingredients in place, we start the proof of our main result, which we restate for convenienve.

\Main*

\begin{proof}[Proof of~\cref{t:main2}.]
	First we show that the theorem holds for $\eps \geq 12/11$. Let $k = \lceil \frac{|T|}{r}\rceil \leq \frac{|T|}{r}+1$. By \cref{l:largeeps} $$\s{\underbrace{x,\ldots,x}_{k \text{ times}}, \:\underbrace{r-x,\ldots,r-x}_{k \text{ times}}} \in \mc{C}(T)$$ for every $x \in [0,r]$, since the sum of these radii is $kr\geq |T|$. By \cref{l:nuBasics} there exists a
	 probabilistic distribution $\nu$ on $\mc{C}(T,kr-|T|)\subseteq \mc{C}(T,r)$ such that $$E\nu = 2k\cdot\bl{U}[0,r] \leq  2\s{\frac{|T|}{r}+1}\bl{U}[0,r] \leq 2\s{1+\frac{\eps}{24}}\frac{|T|}{r}\bl{U}[0,r]\leq  (1+\eps)\frac{|T|}{r}\bl{U}[0,r],$$
	 as desired, where the last inequality holds by the choice of $\eps$ to be relatively large.
	 
	 We now prove that the theorem holds for $\eps$ such that $\eps \geq \frac{15}{n}$ for some integer $n$ by induction on $n$, which implies that the theorem holds all $\eps>0$. The base case for $n \leq 13$ was established above.
	 
	 Suppose now $\frac{15}{n-1} > \eps \geq \frac{15}{n}$ for $n \geq 14$.	Let $\eps' =\eps+\frac{\eps^2}{13}, \delta=\frac{\eps}{8}$. Then $$\eps' \geq \frac{15}{n}\s{1+\frac{15}{13n}
	 } \geq \frac{15}{n-1}$$
	by our lower bound on $n$.
 	Thus the theorem holds for $\eps'$ by the induction hypothesis.

	 By \cref{l:controlled2} there exists a probability distribution $\nu_0$ on $\mc{C}(T,r)$, as well as reals $\alpha_1,\ldots,\alpha_k \geq 0$ and $0 \leq a_1,a_2,\ldots,a_k \leq (1-\delta)r$, such that $$E\nu_0 =\sum_{i=1}^{k}\alpha_i\bl{U}[a_i,r],$$ and \begin{equation}\label{e:mn0} \m(E\nu_0) \leq (1+\delta)|T| + 2r \leq \s{1+\frac{\eps}{8}}|T|+\frac{\eps}{12}|T| \leq  \s{1+\frac{\eps}{4}}|T|, \end{equation}
 where the second to last inequality uses the choice of $\delta$ and the condition $r \leq \frac{\eps|T|}{24}$ in the theorem statement.
 
	 By \eqref{e:mU} and the linearity of $\m(\cdot)$ we have $\m(E\nu_0) = \sum_{i=1}^k\alpha_i(r+ a_i)$, and so \eqref{e:mn0} implies 
	 \begin{equation}\label{e:mn01} \sum_{i=1}^k\alpha_i(r+ a_i)\leq \s{1+\frac{\eps}{4}}|T|.\end{equation}

	  Note that $|T|\geq \frac{24r}{\varepsilon}\geq\frac{24a_i}{\varepsilon'}$ for every $i\in [k]$. Then by the induction hypothesis, for each $i\in [k]$ there exists a probabilistic distribution $\nu_i$ on $\mc{C}(T,a_i) \subseteq \mc{C}(T,r)$ such that $E\nu_i \leq(1+\eps')\frac{|T|}{a_i}\bl{U}{[0,a_i]}$. Let $$q = (1+\eps')|T|+\sum_{i=1}^k\frac{\alpha_ia_i^2}{r-a_i},  \qquad p_0 = \frac{(1+\eps')|T|}{q}$$ and let $$p_i = \frac{\alpha_i a_i^2}{(r-a_i)q}$$ for $i \in [k]$. Of course, $\sum_{i=0}^k p_i=1$. 
	 By \cref{l:nuBasics}(c) here exists is a probability measure $\nu$ on $\mc{C}(T,r)$ such that
	 \begin{align*}
	 E\nu  
	 &\leq p_0 \cdot E\nu_0 +\sum_{i=1}^{k} \s{p_i \cdot E\nu_i}\\
	 &=  p_0\sum_{i=1}^{k} \s{\frac{p_i}{p_0} \cdot E\nu_i +  \alpha_i\bl{U}[a_i,r]} \\ 
	 &\leq p_0\sum_{i=1}^{k}  \s{\frac{\alpha_i a_i^2}{ (1+\eps')|T|(r-a_i)} \cdot (1+\eps')\frac{|T|}{a_i}\bl{U}{[0,a_i]} + \alpha_i\bl{U}[a_i,r]} \\ 
	 &=
	 p_0\sum_{i=1}^{k} \s{ \frac{\alpha_i r}{(r-a_i)}\s{\frac{a_i}{r}\bl{U}{[0,a_i]} + \frac{r-a_i}{r}\bl{U}[a_i,r]}} \\ 
	 &= p_0\s{\sum_{i=1}^{k}\frac{\alpha_i r}{(r-a_i)}}\bl{U}[0,r],
	  \end{align*}
	  using in particular \eqref{e:Urelation}.

Thus	it suffices to show that
\begin{equation}\label{e:ai0} \frac{(1+\eps)|T|}{r} \geq p_0\s{\sum_{i=1}^{k}\frac{\alpha_i r}{(r-a_i)}}.\end{equation}
Substituting the value of $p_0$, expanding $q$ and rearranging, we obtain that inequality \eqref{e:ai0} is equivalent to
\begin{align*}
\sum_{i=1}^{k}\frac{\alpha_i r^2}{r-a_i} \leq 
(1+\eps)|T| + \frac{1+\eps}{1+\eps'}\sum_{i=1}^k\frac{\alpha_i a_i^2}{r-a_i}, \end{align*}
which we can also rewrite as
\begin{align}\label{e:ai}
 \sum_{i=1}^{k}\alpha_i (r+a_i) + \frac{\eps'-\eps}{1+\eps'}\sum_{i=1}^k\frac{\alpha_i a_i^2}{r-a_i} \leq 
 (1+\eps)|T|.
\end{align}
As $a_i \leq (1-\delta) r$ for every $i\in [k]$ we have $$\frac{a_i^2}{r-a_i} \leq \frac{r(r+a_i)}{2(r-a_i)} \leq \frac{(r+a_i)}{2\delta},$$ and therefore the second term of the left-hand side of \eqref{e:ai} is upper bounded by $$\frac{\eps'-\eps}{2\delta}\sum_{i=1}^{k}\alpha_i (r+a_i)= \frac{4\eps}{13}\sum_{i=1}^{k}\alpha_i (r+a_i),$$
where the equality holds by the choice of $\delta$ and $\eps'$.
Thus \eqref{e:ai} is implied by
\begin{equation}\label{e:ai1} \s{1+ \frac{4\eps}{13}}\sum_{i=1}^{k}\alpha_i (r+a_i)  \leq (1+\eps)|T|. \end{equation}
By \eqref{e:mn01}, the inequality \eqref{e:ai1} is further implied by
$$  \s{1+ \frac{4\eps}{13}} \s{1+\frac{\eps}{4}}|T| \leq (1+\eps)|T|.$$
This last inequality can finally easily seen to hold for $\eps \leq 2$.
\end{proof}

\section{Proof of \cref{thm:main}}\label{s:proof}

In this section, we will deduce our main result from \cref{t:main2}. We will need some classical results in probability theory. 

\begin{thm}[Markov's inequality]\label{thm:markov}
	If $X\geq 0$ is a random variable and $a>0$, then 
	$$\P(X\geq a)\leq \frac{\E[X]}{a}.$$
\end{thm}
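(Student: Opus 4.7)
The plan is to prove Markov's inequality by the standard indicator-function argument, which is the cleanest route and uses only monotonicity and linearity of expectation.

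First I would introduce the indicator random variable $Y = \mathbf{1}_{\{X \geq a\}}$, which takes the value $1$ on the event $\{X \geq a\}$ and $0$ otherwise. The key pointwise inequality is $a Y \leq X$: on the event $\{X \geq a\}$ we have $aY = a \leq X$, while on the complement we have $aY = 0 \leq X$ since $X \geq 0$ by hypothesis. This is the only place where the non-negativity assumption on $X$ is used, and it is essential.

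Next I would take expectations of both sides. By monotonicity of expectation, $\E[aY] \leq \E[X]$, and by linearity the left-hand side equals $a \E[Y] = a \P(X \geq a)$. Dividing by $a > 0$ yields the desired bound
\[
\P(X \geq a) \leq \frac{\E[X]}{a}.
\]

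There is no real obstacle here; the entire argument is a two-line application of monotonicity of expectation once the indicator comparison $aY \leq X$ is set up. The only subtlety worth flagging is the role of the hypothesis $X \geq 0$, which ensures the comparison holds on the event $\{X < a\}$ as well; without it, the inequality can fail (a random variable taking large negative values could have small expectation yet substantial mass above $a$ in absolute terms, but Markov's inequality would not apply).
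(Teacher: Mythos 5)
Your proof is correct: the indicator comparison $aY \leq X$, monotonicity and linearity of expectation, and division by $a>0$ constitute the standard and complete argument, and you correctly identify that $X \geq 0$ is needed precisely to handle the event $\{X < a\}$. Note that the paper states Markov's inequality as a classical result without proof (it is only used as a tool in the proof of the main covering theorem), so there is no in-paper argument to compare against; your write-up is the canonical textbook proof and nothing more is needed.
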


\begin{thm}[Hoeffding's inequality]\cite{hoeffding_probability_1963}\label{thm:hoeffding}
	If $X_1,\dots, X_m$ are independent random variables with values in $[a,b]$, $X=\sum_{i=1}^m X_i$ and $t>0$, then$$\P\left(X\geq \E[X] + t\right)\leq\exp\left(-\frac{2t^2}{m(b-a)^2}\right).$$
\end{thm}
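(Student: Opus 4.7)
The plan is to proceed via the classical Chernoff-type exponential moment method. The starting observation is that for any $s>0$, monotonicity of $x\mapsto e^{sx}$ gives $\{X\geq \E[X]+t\} = \{e^{s(X-\E[X])}\geq e^{st}\}$, so Markov's inequality (\cref{thm:markov}) yields
\[
\P(X\geq \E[X]+t)\leq e^{-st}\,\E\!\left[e^{s(X-\E[X])}\right].
\]
By independence of the $X_i$, the moment generating function factorises:
\[
\E\!\left[e^{s(X-\E[X])}\right] = \prod_{i=1}^m \E\!\left[e^{s(X_i-\E[X_i])}\right].
\]
So the task reduces to bounding each factor, and then optimising over $s$.

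The heart of the argument, and the main obstacle, is the following lemma (Hoeffding's lemma): if $Y$ is a random variable with $\E[Y]=0$ and $Y\in[c,d]$ almost surely, then $\E[e^{sY}]\leq \exp\!\left(s^2(d-c)^2/8\right)$. The proof I have in mind uses convexity: for $y\in[c,d]$, write $y=\lambda c+(1-\lambda)d$ with $\lambda=(d-y)/(d-c)$, apply convexity of $x\mapsto e^{sx}$ to get $e^{sy}\leq \lambda e^{sc}+(1-\lambda)e^{sd}$, take expectations (using $\E[Y]=0$) to bound $\E[e^{sY}]$ by a function $\varphi(s)$ depending only on $s$, $c$, $d$. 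Then introduce $h=s(d-c)$ and $p=-c/(d-c)\in[0,1]$, and show $\log\varphi(s)=-hp+\log(1-p+pe^h)$; differentiating twice in $h$ and bounding the second derivative by $1/4$ (it has the form $q(1-q)$ for a probability $q$) gives, via a Taylor expansion around $h=0$, that $\log\varphi(s)\leq h^2/8 = s^2(d-c)^2/8$.

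With the lemma in hand, apply it to $Y_i := X_i-\E[X_i]$, which has mean zero and lies in an interval of length $b-a$, so $\E[e^{sY_i}]\leq \exp(s^2(b-a)^2/8)$. Combining gives
\[
\P(X\geq \E[X]+t)\leq \exp\!\left(-st+\frac{m s^2(b-a)^2}{8}\right).
\]
Finally, optimise the right-hand side over $s>0$: the quadratic in $s$ inside the exponential is minimised at $s^\ast = 4t/(m(b-a)^2)$, producing exponent $-2t^2/(m(b-a)^2)$ and yielding the claimed bound. The only delicate piece is the second-derivative bound $q(1-q)\leq 1/4$ in the proof of Hoeffding's lemma; everything else is routine algebra, factorisation by independence, and one scalar optimisation.
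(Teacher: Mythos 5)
Your proposal is correct: the paper does not prove this statement but simply cites Hoeffding's 1963 paper, and your argument (Markov's inequality applied to $e^{s(X-\E[X])}$, factorisation by independence, Hoeffding's lemma via convexity and the bound $q(1-q)\leq \tfrac14$ on the second derivative, then optimising at $s^\ast=4t/(m(b-a)^2)$) is exactly the standard proof given in that reference. No gaps; the only implicit points worth a word in a written version are that $\E[Y_i]=0$ forces $c\leq 0\leq d$ so $p\in[0,1]$, and the degenerate case $b=a$ is trivial.
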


We now convert the random covers given by \cref{t:main2} into uniform covers, proving a metric (but not fractional) equivalent of \cref{thm:main}. 

\begin{thm}\label{thm:integerversion}
	For every $\varepsilon>0$, there exists $K=K_{\ref{thm:integerversion}}(\varepsilon)$ such that if $K\leq k\in \N$ and $T$ is a metric tree such that $|T|\leq (1-\varepsilon)k^2$, then $(1,2,\ldots,k)$ is a cover of $T$.
\end{thm}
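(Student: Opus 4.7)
The plan is to use the random covers guaranteed by \cref{t:main2} to produce, via concentration of measure, a deterministic cover of $T$ whose radii can then be matched to the prescribed set $\{1,2,\ldots,k\}$. Fix auxiliary constants $\eta = \varepsilon' = \varepsilon/4$, set $r := (1-\eta)k$ and $l := 24\varepsilon'^{-1}r$. We may assume $|T| > k(k+1)/2$, since otherwise $(1,2,\ldots,k) \in \mathcal{C}(T)$ already by \cref{l:largeeps}. Apply \cref{l:partition3} to decompose $T$ into pieces $T_1,\ldots,T_m$ with $l \leq |T_i| \leq 3l$; note that $m = \Theta(k)$ and $|T_i|/r = \Theta(1)$. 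On each $T_i$ independently sample a random cover $\nu_i$ according to \cref{t:main2} (whose hypothesis $|T_i| \geq 24\varepsilon'^{-1}r$ holds by construction), and concatenate these into a random cover of $T$ using only radii in $[0,r]$.

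Let $N(j)$ denote the number of sampled radii strictly greater than $j$ and $N_0$ the total number of sampled radii. By independence both decompose as sums of mutually independent contributions from the individual $\nu_i$, each bounded by the maximum cover size in the support of a single $\nu_i$; this size is at most a constant $M = M(\varepsilon)$, by a routine inspection of the inductive construction of \cref{t:main2} showing that applications of \cref{l:controlled1} and \cref{l:controlled2} produce covers of size $O(|T_i|/r)$ and that the inductive step in \cref{t:main2} takes mixtures of these distributions (rather than concatenating), thereby preserving this size bound. Since sampled radii lie in $[0,r]$, we have $N(j) = 0$ deterministically whenever $j \geq r$; for integer $j \in \{0,\ldots,\lfloor r \rfloor\}$, combining $E\nu_i \leq (1+\varepsilon')(|T_i|/r)\bl{U}[0,r]$ with the hypothesis $|T| \leq (1-\varepsilon)k^2$ and a short algebraic check using $r = (1-\eta)k$ yields
\[
\E[N(j)] \leq \frac{(1+\varepsilon')(1-\varepsilon)}{(1-\eta)^2}(r-j) \leq (k-j) - \eta k,
\]
and similarly $\E[N_0] \leq (1-\varepsilon/2)k$. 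Applying \cref{thm:hoeffding} to each of $N(j)$ and $N_0$ then gives $\P(N(j) > k-j) \leq \exp(-c(\varepsilon)k)$ and $\P(N_0 > k) \leq \exp(-c(\varepsilon)k)$ for some $c(\varepsilon) > 0$; a union bound over the $O(k)$ relevant events shows that, provided $k \geq K_{\ref{thm:integerversion}}(\varepsilon)$ is sufficiently large, the event
\[
\mathcal{E} := \{N_0 \leq k\} \cap \bigcap_{j=0}^{k-1} \{N(j) \leq k-j\}
\]
occurs with positive probability.

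On $\mathcal{E}$, sort the sampled radii in decreasing order $\rho_{(1)} \geq \rho_{(2)} \geq \ldots \geq \rho_{(N_0)}$; the conditions defining $\mathcal{E}$ are precisely those needed for $\rho_{(i)} \leq k - i + 1$ to hold for every $i \in \{1,\ldots,N_0\}$, so greedily sending $\rho_{(i)}$ to the integer $k - i + 1 \in \{1,\ldots,k\}$ defines an injection from the sampled radii into $\{1,2,\ldots,k\}$ that only enlarges each radius. Retaining the original centers with these enlarged radii, and placing balls of the remaining $k - N_0$ unused integer radii at arbitrary points of $T$, produces the desired cover of $T$ by balls of radii $1,2,\ldots,k$. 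The main technical hurdle to verify carefully is the uniform bound $M$ on the cover sizes in the support of each $\nu_i$; once this is in place, the expectation bound is elementary algebra and the concentration step is routine.
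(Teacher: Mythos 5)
Your overall strategy is the same as the paper's (decompose via \cref{l:partition3}, sample independent random covers from \cref{t:main2}, concentrate, then round the radii up to distinct integers), but the execution breaks at the concentration step, in two related places.

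First, the claimed uniform bound $M=M(\varepsilon)$ on the sizes of covers in the support of each $\nu_i$ is not available, and I do not believe it is true. The inductive step of \cref{t:main2} mixes in measures $\nu_i$ obtained by applying the induction hypothesis to the \emph{whole} tree with radius bound $a_i$, where the $a_i$ come out of \cref{l:controlled1} and can be arbitrarily small positive numbers depending on the geometry of the tree (e.g.\ $\frac{|T'|}{2}-r-2a$ can be made as close to $0$ as you like). Any cover of $T_i$ by balls of radius at most $a_i$ has at least $|T_i|/(2a_i)$ balls, so the support contains covers whose size is bounded only in terms of the particular tree, not in terms of $\varepsilon$. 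The only almost-sure bound that is actually available is the one coming from $r$-goodness: the number of radii exceeding $j$ in a single sampled cover of $T_i$ is at most $(|T_i|+r)/j$. There is no useful almost-sure bound at all on the total count $N_0$, so Hoeffding cannot be applied to $N_0$ (Markov would still give $\P(N_0>k)\le 1-\varepsilon/2$, which would suffice there).

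Second, even with the correct per-piece bound $(|T_i|+r)/j=\Theta(k/(\varepsilon j))$, Hoeffding fails for small $j$. You have $m=\Theta(\varepsilon k)$ summands, slack $t=\eta k$, and range $b-a=\Theta(k/(\varepsilon j))$ per summand, so the exponent $-2t^2/(m(b-a)^2)$ is of order $-\eta^2\varepsilon^3 j^2/k$, which tends to $0$ for any fixed $j$ (and indeed for all $j=o(\sqrt{k})$). Since your greedy matching genuinely needs $N(j)\le k-j$ for \emph{every} integer $j$, including $j=1,2,\dots$, the union bound cannot be closed. This is precisely the difficulty the paper's proof is engineered around: it controls only the $N-1=O(1)$ counts of radii in buckets $[\frac{j}{N}k,\frac{j+1}{N}k]$ of width $\Theta(k)$, for which the per-piece count is $O(1)$ by $r$-goodness so Hoeffding applies, handles the lowest bucket $[0,\frac{k}{N}]$ by Markov alone (accepting failure probability $1-\lambda<1$ for that single event), and then performs a coarser rounding that sends all radii of one bucket to distinct integers in the next bucket up. To repair your argument you would need to abandon the per-integer events $N(j)\le k-j$ in favour of such coarse bucket events.
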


\begin{proof}
	We may of course assume that $\varepsilon<1$. Choose $N\in \N$ large enough so that $\lambda:=1-\frac{\left(1+\frac{1}{N}\right)(1-\varepsilon)}{\left(1-\frac{1}{N}\right)^2}>0$. Choose $P\geq 1$ large enough so that $N\exp\left(-\frac{\lambda^2}{24N^4(1-\varepsilon)^2}P\right) < \lambda$. Let $D=24N$ and  $K = \frac{PD}{1-\varepsilon}$. We show that $K$ satisfies the theorem.
	
	We assume without loss of generality $|T| = (1-\varepsilon)k^2 \geq PDk$ by extending $T$ if needed.
	
By \cref{l:partition3},  there exist a decomposition $\{T_1,\dots,T_m\}$ of $T$ such that $Dk\leq|T_i|\leq 3Dk$ for every $i\in \{1,\dots,m\}$. Note that this implies that $mDk\leq |T|\leq 3mDk$ and in particular $m\geq \frac{P}{3}$.
	
	Setting $r=\left(1-\frac{1}{N}\right)k$, apply \cref{t:main2} to $T_i$ for each $i\in \{1,\dots,m\}$ with the parameter $\eps=\frac{24r}{|T_i|}$to obtain a probability measure $\nu^i$ on $\mathcal C(T_i,r)$ such that $$E\nu^i \leq\left(1+\frac{24r}{|T_i|}\right)\frac{|T_i|}{r}\bl{U}{[0,r]}\leq \left(1+\frac{1}{N}\right)\frac{|T_i|}{r}\bl{U}{[0,r]}.$$ 
	
	Let $\bl{r}^i$ be a random cover of $T_i$ following the law $\nu^i$. For $0\leq \alpha\leq 1-\frac{2}{N}$, let $$Z_i(\alpha) = \#\s{\left[\alpha k, \s{\alpha+\frac{1}{N}}k \right],\bl{r}^i}$$ be the random variable equal  to the number of radii in $\bl{r}_i$ that belong to the interval $\left[\alpha k, (\alpha+\frac{1}{N})k\right]$.
Then $\E\left[Z_i(\alpha)\right] = E\nu^i\left(\left[\alpha k, (\alpha+\frac{1}{N})k\right]\right)$ by definition of $E\nu^i$ and so
	$$\E\left[Z_i(\alpha)\right]\leq \left(1+\frac{1}{N}\right)\frac{|T_i|}{r}\cdot \left(\bl{U}[0,r]\right)\left(\left[\alpha k,\left(\alpha+\frac{1}{N}\right)k\right]\right)=\frac{\left(1+\frac{1}{N}\right)|T_i|}{N\left(1-\frac{1}{N}\right)^2k}$$
	for every $1 \leq i \leq m$.
	Hence, $$\E\left[\sum_{i=1}^m Z_i(\alpha)\right]\leq \frac{\left(1+\frac{1}{N}\right)|T|}{N\left(1-\frac{1}{N}\right)^2k}= \frac{\left(1+\frac{1}{N}\right)(1-\varepsilon)k^2}{N\left(1-\frac{1}{N}\right)^2k}=\frac{(1-\lambda)k}{N}.$$
	
	Firstly, using Markov's inequality (\cref{thm:markov}),
	$$\P\left(\sum_{i=1}^m Z_i(0)\geq \frac{k}{N}\right)\leq \frac{\E\left[\sum_{i=1}^m Z_i(0)\right]}{\frac{k}{N}}\leq 1-\lambda.$$
	
	This above inequality holds for any $\alpha$, but we will need a stronger result in general. When $\alpha\geq \frac{1}{N}$, we will use a Chernoff-type bound.
	
	Since $\bl{r}^i$ is $r$-good, for every $i\in \{1,\dots,m\}$ we have
	$$Z_i(\alpha) \cdot \alpha k \leq \|\bl{r}^i\|_1  \leq |T_i|+r\leq (3D+1)k\leq 4Dk$$
	and thus 
	$$0\leq Z_i(\alpha)\leq \frac{4D}{\alpha}\leq \frac{4N|T|}{mk}=\frac{4N(1-\varepsilon)k}{m}.$$
	using the conditions $|T|\geq mDk$ and $|T|=(1-\varepsilon)k^2$.
	
	Then, applying Hoeffding's inequality (\cref{thm:hoeffding}), we get
	\begin{align*}
		\P\left(\sum_{i=1}^m Z_i(\alpha)\geq \frac{k}{N}\right)
		&\leq \P\left(\sum_{i=1}^m Z_i(\alpha)\geq \E\left[\sum_{i=1}^m Z_i(\alpha)\right]+\left(\frac{k}{N}-\frac{(1-\lambda)k}{N}\right)\right)\\
		&\leq\exp\left(-\frac{2\left(\frac{\lambda k}{N}\right)^2}{m\cdot \left(\frac{4N(1-\varepsilon)k}{m}\right)^2}\right)\\
		&=\exp\left(-\frac{\lambda^2}{8N^4(1-\varepsilon)^2}m\right)\\
		&\leq\exp\left(-\frac{\lambda^2}{24N^4(1-\varepsilon)^2}P\right).
	\end{align*}

	Combining these, by the union bound we have
	\begin{align*}
		\P\left(\exists j\in \{0,\dots,N-2\}, \sum_{i=1}^m Z_i\left(\frac{j}{N}\right)\geq \frac{k}{N}\right)
		&\leq \sum_{j=0}^{N-2} \P\left(\sum_{i=1}^m Z_i\left(\frac{j}{N}\right)\geq \frac{k}{N}\right)\\
		&<(1-\lambda)+N\exp\left(-\frac{\lambda^2}{24N^4(1-\varepsilon)^2}P\right)\\
		&<1
	\end{align*}
	by our choice of $P$. Hence, there exist covers $\bl{r}^1, \ldots, \bl{r}^m$  of $T_1,\dots,T_m$, respectively using radii in $[0,(1-\frac{1}{N})k]=[0,r]$ such that the total number of radii in $\left[\frac{j}{N}k,\frac{j+1}{N}k\right]$ that are used is smaller than $\frac{k}{N}$, and so at most $\left\lfloor\frac{k}{N}\right\rfloor$.
	
	The concatenation of these covers is a cover $\bl{r}$ of $T$, from which
	we now construct a cover using radii $\{0,\dots,k\}$. Let $j\in\{1,\dots,N-2\}$. The interval $\left(\frac{j+1}{N}k,\frac{j+2}{N}k\right]$ contains at least $\left\lfloor\frac{k}{N}\right\rfloor$ integers. Thus we can increase the radii in  $\bl{r}$ that lie in the interval $\left[\frac{j}{N}k,\frac{j+1}{N}k\right]$ replacing them by distinct integers in $\left(\frac{j+1}{N}k,\frac{j+2}{N}k\right]$.
	The radii in the resulting modified cover are distinct integers in the interval $[\frac{1}{N}k, k]$, implying that $(1,2,\ldots,k)$ is a cover of $T$, as desired.
\end{proof}

We now  come back to the original discrete setting. Let $T$ be a (discrete) tree. As in the metric setting, for  $r \geq 0$ and $v \in V(T)$, we denote by $B_T(v,r)$ the closed ball of radius $r$ with center $v$, i.e. 
$$B_T(v,r)=\{u \in V(T) | d_T(v,u) \leq r\},$$
where $ d_T(\cdot,\cdot)$ is the usual graph metric, i.e. $d_T(v,u)$ is the number of edges in the unique path with ends $u$ and $v$. 
As before, $(r_1,\ldots,r_m)$ is a \emph{cover} of $T$ if there exist $v_1,\ldots,v_m \in V(T)$ such that $V(T) = \cup_{i=1}^m B_T(v_i,r_i).$ We denote by $T^{M}$  a metric tree obtained from $T$ by replacing each edge by an interval of length one. Note that $d_{T^M}(u,v) = d_T(u,v)$  for any $u,v \in V(T)$.

\begin{lem}\label{lem:integralrounding}
	If $T$ be a discrete tree and $(r_1,\ldots,r_m)$ is a cover of the corresponding metric tree $T^M$, then $(r_1+1,\ldots,r_m+1)$ is a cover of $T$.
\end{lem}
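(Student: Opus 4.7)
The plan is to take the given cover of $T^M$ by closed metric balls $B_{T^M}(v_i,r_i)$ and round each center $v_i$ to a nearby vertex $w_i \in V(T)$, showing that enlarging the radius by $1$ compensates for this rounding.

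First I would observe that for the cover $(r_1,\dots,r_m)$ of $T^M$, each center $v_i$ is some point of $T^M$. Either $v_i \in V(T)$, in which case I set $w_i := v_i$, or $v_i$ lies in the interior of some edge $uw$ of $T$, in which case I set $w_i$ to be one of the two endpoints (say, $w_i := u$). In the first case it is immediate that $B_{T^M}(v_i,r_i)\cap V(T) \subseteq B_T(w_i,r_i) \subseteq B_T(w_i,r_i+1)$, using $d_{T^M}(u,v)=d_T(u,v)$ for $u,v\in V(T)$.

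The key step is the second case. Write $t = d_{T^M}(v_i,u)\in(0,1)$, so $d_{T^M}(v_i,w)=1-t$. For any vertex $x\in V(T)\cap B_{T^M}(v_i,r_i)$, the unique $v_i$--$x$ path in $T^M$ must pass through either $u$ or $w$, giving either $d_T(u,x) = d_{T^M}(v_i,x)-t$ or $d_T(u,x)\le 1 + d_T(w,x) = 1 + d_{T^M}(v_i,x) - (1-t) = t + d_{T^M}(v_i,x)$. In both sub-cases $d_T(w_i,x)\le r_i + t \le r_i+1$, so $x\in B_T(w_i,r_i+1)$.

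Combining, every vertex of $T$ lies in $B_{T^M}(v_i,r_i)$ for some $i$ (since the $r_i$ cover $T^M$), and hence in $B_T(w_i,r_i+1)$. Therefore $(r_1+1,\dots,r_m+1)$ is a cover of $T$. I do not expect any real obstacle here; the only subtlety is being careful about the case where $v_i$ lies in the interior of an edge, for which the triangle-inequality computation above is the decisive ingredient.
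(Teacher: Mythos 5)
Your proposal is correct and follows essentially the same approach as the paper: round each center $v_i$ to a vertex $u_i$ on the same segment, note $d_{T^M}(v_i,u_i)\leq 1$, and absorb the displacement into the extra unit of radius (the paper does this in one stroke via $B_{T^M}(v_i,r_i)\subseteq B_{T^M}(u_i,r_i+1)$ and the fact that $T^M$ preserves vertex distances, whereas you carry out the equivalent triangle-inequality case analysis explicitly).
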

\begin{proof}
	 Let $v_1,\ldots,v_m \in T^M$ be such that $T^M = \cup_{i=1}^m B_{T^M}(v_i,r_i).$

	For each $i\in [m]$, let $u_i\in V(T)$ be an end point of the segment of $T^M$ containing $v_i$.  In particular $d_{T^M}(u_i,v_i) \leq 1$, and so $B_{T^M}(v_i,r_i) \subseteq B_{T^M}(u_i,r_i+1)$, implying $T^M = \cup_{i=1}^m B_{T^M}(u_i,r_i+1).$  
	 As $V(T) \subseteq T^M$ and the distances between vertices of $T$ are preserved in $T^M$, we have $V(T) = \cup_{i=1}^m B_{T}(u_i,r_i+1),$  
implying that $(r_1+1,\ldots,r_m+1)$ is a cover of $T$.
\end{proof}

Our main result readily follows from \cref{thm:integerversion} and \cref{lem:integralrounding}.

\begin{proof}[Proof of \cref{thm:main}] We need to show that for every $0 < \varepsilon < 1$, there exists $N$ such that if $G$ is a connected graph on $n\geq N$ vertices then $b(G)\leq (1+\varepsilon)\sqrt{n}$. 
	
	Let $\eps'=\frac{\eps}{2}$, we show that  $$N = \max\left\{\left(K_{\ref{thm:integerversion}}(\eps')\right)^2,\frac{6}{\eps}\right\}$$ has the above property.

	Let $G$ be  a connected graph on $n\geq N$ vertices. As noted in the introduction, it suffices to consider any spanning tree $T$ of $G$; burning $T$ will also burn $G$, hence $b(G)\leq b(T)$. Let $T^M$ be the metric tree corresponding to $T$.
	
	Set $k=\left\lceil\sqrt{\frac{n}{1-\varepsilon'}}\right\rceil$. We have that $k\geq \sqrt{\frac{n}{1-\varepsilon'}}\geq \sqrt{\frac{1}{1-\varepsilon'}N}\geq K_{\ref{thm:integerversion}}(\varepsilon')$ and $|T^M|=n-1\leq (1-\varepsilon')\left\lceil\sqrt{\frac{n}{1-\varepsilon'}}\right\rceil^2= (1-\varepsilon')k^2$.
	
	Hence, by \cref{thm:integerversion}, $(1,\ldots,k)$ is a cover of $T^M$. By \cref{lem:integralrounding} it follows that $(2,\ldots,k+1)$  is a cover of $T$. Thus $$b(T)\leq (k+1)+1=\left\lceil\sqrt{\frac{n}{1-\frac{\varepsilon}{2}}}\right\rceil+2\leq \left(1+\frac{\varepsilon}{2}\right)\sqrt{n}+3\leq (1+\varepsilon)\sqrt{n},$$ as desired, where the last inequality uses the condition $n \geq N \geq \frac{6}{\eps}.$
\end{proof}

\section{Concluding remarks}\label{s:remark}

\subsection{Eliminating the error}

We have shown that the Burning Number Conjecture holds asymptotically, i.e. $b(G) \leq (1+o(1))\sqrt{n}$ for every connected $n$ vertex graph $G$. A natural next direction would be to attempt to eliminate the error term, proving the conjecture in full.

Unfortunately, our method does not seem to give much insight in the behaviour of burning number of small graphs. It might be conceivable that our argument can be used as a starting point for the proof of the Burning Number Conjecture for sufficiently large $n$, but such an extension is likely to be quite difficult and require additional ideas.

On the other hand, eliminating the error term in \cref{t:main2} is more likely to be within reach. The conclusion of  \cref{t:main2} does not hold with $\eps=0$ if $T$ is an interval with $|T| < 2r$, as the radii of length $\frac{|T|}{2}$ can not be utilized without waste, but we conjecture that this is the only obstruction. 

\begin{conjecture}\label{conj:main2} If $ r > 0$ and $T$ is a metric tree such that $|T| \geq 2r$, then there exists a probability measure $\nu$ on $\mc{C}(T,0)$  such that $$E\nu \leq \frac{|T|}{r}\bl{U}[0,r].$$ 			
\end{conjecture}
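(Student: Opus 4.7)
The conjecture is the sharp ($\varepsilon = 0$) form of \cref{t:main2}, so a natural starting point is to examine precisely where the existing proof loses a $(1+\varepsilon)$ factor. Inspecting the inductive step in the proof of \cref{t:main2}, the condition that genuinely suffices for the sharp conclusion is $\sum_{i=1}^k \alpha_i(r + a_i) \leq |T|$, i.e.\ $\m(E\nu_0) \leq |T|$; all the loss in the proof enters through the weaker bound $\m(E\nu_0) \leq (1+\delta)|T| + 2r$ provided by \cref{l:controlled2}. Thus the conjecture reduces to constructing, for every metric tree $T$ with $|T| \geq 2r$, a probability measure $\nu_0$ on $\mc{C}(T, 0)$ with $E\nu_0 = \sum_i \alpha_i \bl{U}[a_i, r]$ for some $a_i \in [0, r)$ and $\m(E\nu_0) \leq |T|$, together with a limiting argument to make sense of iterating at $\varepsilon = 0$.

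My plan for the base construction is to replace \cref{l:partition2} with \cref{l:partition3} at parameter $l = 2r$. This decomposes $T$ into pieces of length in $[2r, 6r]$, each satisfying the hypothesis of the conjecture, and eliminates the leftover short piece $T_0$ responsible for the $+2r$ slack. The remaining task is an analog of \cref{l:controlled1} giving $\m(E\nu) \leq |T|$ exactly for $l$-minimal trees with $l$ in this range. The existing \cref{l:controlled1} already achieves equality in its middle regime where $\frac{|T|}{2} - 2a \geq r$; the problematic regime is $\frac{|T|}{2} - 2a < r$, where a $2$-ball cover forces one radius outside $[0, r]$. I would replace the $2$-ball cover with a $3$-ball cover, adapting \cref{l:2cover} to produce a two-parameter family of 3-ball covers whose averaged expectation is a combination of $\bl{U}[a_i, r]$ measures with total $\m$-value equal to $|T|$.

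To handle the iteration at $\varepsilon = 0$, the inductive argument of \cref{t:main2} (which terminates only because of a positive base $\varepsilon$) must be replaced by a limiting argument. Applying the strengthened construction for each $\varepsilon > 0$ would yield $\nu_\varepsilon$ on $\mc{C}(T, 0)$ with $E\nu_\varepsilon \leq (1+\varepsilon)\frac{|T|}{r}\bl{U}[0, r]$ and uniformly bounded first moment. Tightness then follows, and one can extract a weak subsequential limit $\nu_0$ via Prokhorov's theorem; the inequality $E\nu_0 \leq \frac{|T|}{r}\bl{U}[0, r]$ passes to the limit, as does the defining inequality of $\mc{C}(T,0)$ by lower semi-continuity of $\|\cdot\|_1$ summed over coordinates.

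The hardest step will be the sharp 3-ball cover construction in the short-tree regime $|T| \in [2r, 4r]$. \cref{l:2cover} exploits a specific interplay between the branch lengths $l_1, l_2, l_3$ and the saving parameter $a$ which does not directly generalize. I expect one will need a finer structural analysis, perhaps a second application of \cref{l:z} or a case analysis based on how concentrated the branch lengths are, to define the appropriate family of three-ball covers and verify the exact $\m$-bound. I would first test the approach on path-like trees with at most two leaves, where an explicit averaging over 3-ball covers can plausibly be written down in closed form, before attempting the general case with up to three leaves.
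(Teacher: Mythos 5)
You should first be aware that the statement you are proving is not proved in the paper at all: it is \cref{conj:main2}, stated in the concluding remarks precisely as an open problem. The authors only report (without proof) that they can verify it for metric trees with at most three leaves, by the route they sketch: reduce to trees that admit no decomposition $\{T_1,T_2\}$ with $|T_1|,|T_2|\geq 2r$, and extend \cref{l:2cover} to that class by a finer case analysis of the branch lengths. So there is no ``paper proof'' to match; your proposal has to stand on its own, and as written it does not. Your own text defers the decisive step --- the construction of a family of $3$-ball covers with exact first moment $\m(E\nu)\leq |T|$ in the short regime --- calling it ``the hardest step'' and proposing only to ``test the approach'' on paths first. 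That step is exactly the open content of the conjecture (the tripod example at the end of \cref{s:remark} shows why two balls cannot work there), so the proposal is a research plan, not a proof.

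Beyond the missing core construction, two concrete points in your reduction need repair. First, the claim that ``all the loss enters through \cref{l:controlled2}'' is too quick: in \cref{l:controlled1} the slack $\delta r$ in $\m(E\nu)\leq |T|+\delta r$ and the control $a_i\leq (1-\delta)r$ are coupled through the choice $l=2(1-\delta)r$, and the inductive step of \cref{t:main2} needs $a_i$ bounded away from $r$ to control $\sum_i \alpha_i a_i^2/(r-a_i)$. If you instead decompose with \cref{l:partition3} at $l=2r$, you lose that uniform gap, and it is not clear you can have both $\m(E\nu_0)\leq|T|$ exactly and $a_i\leq(1-\delta)r$ for a fixed $\delta>0$; this tension is part of the difficulty, not an artifact of the paper's bookkeeping. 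Second, your limiting argument requires, for every $\eps>0$, a measure $\nu_\eps$ on $\mc{C}(T,0)$ (not $\mc{C}(T,r)$) valid for all $|T|\geq 2r$ (not $|T|\geq 24\eps^{-1}r$); that intermediate statement is strictly stronger than \cref{t:main2}, and even its base case fails as currently argued, since the large-$\eps$ construction via \cref{l:largeeps} produces covers that are only $(kr-|T|)$-good, which can be far from $0$-good when $|T|/r$ is small. The Prokhorov/tightness part of your plan is plausible (the bound $E\nu_\eps([0,r])\leq(1+\eps)|T|/r$ controls the expected number of balls, and being a cover and being $0$-good are closed conditions), but it only becomes relevant once the strengthened finite-$\eps$ statement is actually established.
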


It suffices to prove \cref{conj:main2} for metric trees $T$ that do not admit a decomposition $\{T_1,T_2\}$ such that
$|T_1|,|T_2| \geq 2r$, which might be possible by extending \cref{l:2cover} from the class of $2r$-minimal trees to this larger class. We were able to do implement this strategy to show that \cref{conj:main2} holds for metric trees with at most three leaves, using by a more detailed case analysis of the possible ratios between the length of the three branches in \cref{l:2cover}.

 \subsection{General radii}

Given a sequence of radii $(r_1,\dots,r_k)$ what is the maximum $D$ such that $(r_1,\dots,r_k)$ is a cover of every metric tree $T$ with $|T| \leq D$.  By considering intervals (i.e. paths) we see that $D \leq 2 \sum_{i=1}^kr_i$. 
A metric analogue of the Burning Number Conjecture suggests that the equality holds for the sequence $(1,\ldots,k)$, but it is unclear which properties of sequence make the conjecture plausible, motivating the following question.

\begin{question}\label{q:general} Which sequences $(r_1,\dots,r_k)$ of positive reals have the property that $(r_1,\dots,r_k)$ a cover of every metric tree with $|T| \leq 2 \sum_{i=1}^kr_i$?
\end{question}

We believe that the following large and natural class of sequences of radii, which includes the sequences $(1,\ldots,k)$, respects this property. We say a sequence $(r_1,\dots,r_k)$ of non-negative reals is \emph{convex} if $$r_1 \leq r_2 -r_1 \leq r_3 - r_2 \leq \ldots \leq  r_k-r_{k-1}.$$

\begin{conjecture}\label{c:convex}
	If $(r_1,\dots,r_k)$ is a convex sequence of non-negative reals and $T$ is a metric tree such that $|T|\leq 2\sum_{i=1}^k r_i$, then $(r_1,\dots,r_k)$ is a cover of $T$.
\end{conjecture}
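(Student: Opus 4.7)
I plan to induct on $k$. The case $k=1$ follows from \cref{l:diam}, since $|T|\leq 2r_1$ forces $\diam(T)\leq 2r_1$. For $k\geq 2$, I would first dispose of two easy reductions. If $\diam(T)\leq 2r_k$, a single ball of radius $r_k$ covers $T$ by \cref{l:diam}, and the remaining balls can be placed arbitrarily. If $|T|\leq 2\sum_{i=1}^{k-1}r_i$, the induction hypothesis applied to $(r_1,\ldots,r_{k-1})$ (which is still convex) covers $T$, and again the extra $r_k$-ball is placed anywhere.

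The main case is therefore $\diam(T)>2r_k$ and $|T|>2\sum_{i=1}^{k-1}r_i$. Setting $L:=|T|-2\sum_{i=1}^{k-1}r_i>0$, the hypothesis $|T|\leq 2\sum_{i=1}^{k}r_i$ gives $L\leq 2r_k$, so $[L,2r_k]$ is non-empty. The natural strategy is to find a branch $T^*$ of $T$ with $|T^*|\in[L,2r_k]$: then $\bar{T^*}$ is a subtree of length at most $2\sum_{i=1}^{k-1}r_i$, handled by induction, while $T^*$ has $\diam(T^*)\leq |T^*|\leq 2r_k$ and is covered by a single $r_k$-ball.

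The hard part is that branches of arbitrary prescribed length need not exist. For example, the spider with $d$ equal arms of length $\ell$ has branch-length spectrum $[0,\ell]\cup\{j\ell:2\leq j\leq d-1\}\cup[(d-1)\ell,d\ell)$, with open gaps $(j\ell,(j+1)\ell)$. The saving grace, at least for spiders, is that each such gap $(a,b)$ satisfies $b\leq 2a$. I would try to establish this as a general structural lemma: for every metric tree the set of branch lengths has no multiplicative gap $(a,b)$ with $b>2a$. Such a lemma immediately produces a branch in $[\max(L,r_k),2r_k]\subseteq[L,2r_k]$ whenever that interval is not forced entirely into a single gap. When $[L,2r_k]$ does lie inside a gap, the convexity of $(r_i)$ should be used to replace the single-branch strategy with a two-step procedure: place the $r_k$-ball at the dominant branching point of $T$ so that it covers one whole arm together with an overflow into the others, and cover the resulting stubs one-by-one with $(r_1,\ldots,r_{k-1})$. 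A model computation for the spider confirms the arithmetic: the constraint $|T|\leq 2\sum r_i$ combined with convexity forces $r_1\geq \ell-r_k$, which is precisely the radius needed to cover each leftover stub of length $2(\ell-r_k)$ on its own.

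The main obstacle I anticipate is this matching step in full generality: once the $r_k$-ball has been placed, one needs an injective assignment of $r_1,\ldots,r_{k-1}$ to the components of the resulting forest respecting both the convex order and the component sizes. For arbitrary trees this is a nontrivial packing problem, and I expect that either a carefully chosen centroid-type placement of the $r_k$-ball or a direct extension of \cref{l:2cover} from two balls to the full convex sequence will be required to close it. Since \cref{c:convex} specialized to $(1,2,\ldots,k)$ already implies the metric Burning Number Conjecture, any complete proof must be at least as deep as the full resolution of that open problem.
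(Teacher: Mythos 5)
First, a point of calibration: \cref{c:convex} is stated in the paper as an open conjecture, not a theorem. The paper proves only the case $k=2$ (\cref{t:convex}), via the two-ball cover machinery of \cref{l:2cover}, and observes that convexity is necessary there; no proof for general $k$ exists in the paper, and since the sequence $(1,2,\ldots,k)$ is convex, a full proof would subsume the metric form of the Burning Number Conjecture. So your proposal cannot be checked against a reference argument; it has to stand on its own, and as written it does not.

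Your reductions are fine: the case $k=1$ via \cref{l:diam}, the truncation to $(r_1,\ldots,r_{k-1})$ when $|T|\leq 2\sum_{i=1}^{k-1}r_i$ (truncation preserves convexity), and the observation that a branch $T^*$ with $|T^*|\in[L,2r_k]$ would let you finish by induction on $\bar{T^*}$ and one $r_k$-ball on $T^*$. Your proposed structural lemma (no gap $(a,b)$ with $b>2a$ in the branch-length spectrum) is in fact provable and is the easier of your two missing pieces: take a branch of length $\geq s$ of minimal length (compactness in the Hausdorff metric, as in \cref{s:metricTrees}, gives a minimizer); if its anchor is a leaf of the branch one shrinks continuously to length exactly $s$, and otherwise discarding the smallest component of the branch minus its anchor yields a branch of length in $[s/2,s)$. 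But this only settles the case $L\leq r_k$, since then $[r_k,2r_k]\subseteq[L,2r_k]$ must meet the spectrum. The genuine gap is the remaining case, where $[L,2r_k]$ sits inside a spectral gap: there your ``place the $r_k$-ball at the dominant branch point and cover the leftover stubs'' step is only verified arithmetically for spiders, and the injective assignment of $r_1,\ldots,r_{k-1}$ to the components of the leftover forest is exactly the hard packing problem that \cref{l:2cover} solves (with real effort) for two radii and that nobody knows how to do for general convex sequences. You acknowledge this yourself in your last paragraph, which is the honest assessment: what you have is a plausible reduction plus a provable auxiliary lemma, not a proof, and the unresolved matching step is where the full difficulty of the conjecture lives. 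If you want a tractable project in this direction, extending the case analysis of \cref{l:2cover} to three radii, or to trees with at most three leaves (as the authors indicate they did for \cref{conj:main2}), would be a more realistic target.
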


\cref{c:convex} is  motivated by the fact that we convinced ourselves that a (rather technical) fractional variant of this conjecture holds asymptotically.\footnote{More precisely the measure $\frac{|T|}{r}\bl{U}[0,r]$ in \cref{t:main2} can be replaced by any measure $\mu$ on $[0,r]$ such that $\m(\mu) \geq |T|$ and $\mu$ has non-increasing density with respect to the Lebesgue measure, i.e. there exists a non-increasing function $f:[0,r] \to \bb{R}_+$  such that $\mu([a,b]) =\int^{b}_{a} f(x)dx$ for all $0 \leq a < b \leq r$.}

The methods of \cref{s:covers} can be used to show that \cref{c:convex} holds for $k=2$.

\begin{thm}\label{t:convex}
	If $0\leq r_1 \leq \frac{r_2}{2}$, and $T$ is a metric tree such that $|T|\leq 2(r_1+r_2)$, then $(r_1,r_2)$ is a cover of $T$.
\end{thm}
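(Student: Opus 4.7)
The plan is to apply \cref{l:2cover} to $T$ with the parameter $l = |T|$. First I would verify that every nontrivial metric tree is $|T|$-minimal, which is immediate from the decomposition $\{T, \{v\}\}$ where $v$ is any leaf: then $|T| \leq l$ and $|\{v\}| = 0 \leq l$. (The case $|T| = 0$ is trivial since $T$ is a single point.) For this choice of $l$ we have $\min\{l/2 - |T|/4,\; |T|/12\} = |T|/12$, so \cref{l:2cover} yields some $a \in [0, |T|/12]$ together with the corresponding families of covers of $T$.

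The key arithmetic fact I would extract from the hypotheses is $r_2 \geq |T|/3$. Combining $r_1 \leq r_2/2$ with $r_1 + r_2 \geq |T|/2$ gives $r_2 \geq |T|/2 - r_2/2$, hence $r_2 \geq |T|/3 = |T|/4 + |T|/12 \geq |T|/4 + a$. Thus $r_2$ lies in the permitted range for the larger radius in the two-ball family of covers from \cref{l:2cover}.

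The conclusion then follows from a short case analysis. If $r_2 \geq |T|/2 - 2a$, the single-ball cover $(r_2)$ provided by \cref{l:2cover} already suffices, so $(r_1, r_2)$ is a cover. Otherwise, I would set $x^{\ast} = r_2 - |T|/4 - a$, which lies in $[0,\; |T|/4 - 3a]$ by the two inequalities just established; then the two-ball cover $(|T|/4 - 3a - x^{\ast},\; |T|/4 + a + x^{\ast}) = (|T|/2 - 2a - r_2,\; r_2)$ is guaranteed by \cref{l:2cover}. Since $r_1 \geq |T|/2 - r_2 \geq |T|/2 - 2a - r_2$, the pair $(r_1, r_2)$ dominates this cover componentwise and is therefore itself a cover.

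I do not expect any serious obstacle: the theorem should fall out as a direct corollary of \cref{l:2cover} once the parameter $l = |T|$ is chosen. The unconditional bound $a \leq |T|/12$ in \cref{l:2cover} is calibrated precisely so that $|T|/4 + a \leq |T|/3$, which matches the extremal instance $r_1 = r_2/2$ of the convexity hypothesis; this alignment between the permissible value of $a$ and the tight case of the convexity condition is what makes the argument work without any further ad hoc analysis.
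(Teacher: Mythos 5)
Your proof is correct, and it diverges from the paper's at exactly one point: the choice of $l$ when invoking \cref{l:2cover}. The paper first proves the non-obvious claim that every metric tree is $\tfrac{2|T|}{3}$-minimal (via a decomposition chosen to minimize the larger part, plus a compactness and exchange argument), and then applies \cref{l:2cover} with $l=\tfrac{2|T|}{3}$ to get $a\leq \tfrac{l}{2}-\tfrac{|T|}{4}=\tfrac{|T|}{12}$. You instead observe that $T$ is trivially $|T|$-minimal via the decomposition $\{T,\{v\}\}$ with $v$ a leaf (which is legitimate under the paper's definition of decomposition, and indeed the paper itself uses such trivial parts in the proof of \cref{l:partition1}), and you recover the same bound $a\leq \tfrac{|T|}{12}$ from the unconditional cap in the statement of \cref{l:2cover}. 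This makes your argument shorter: the paper's balanced-decomposition claim is only needed if one wants the bound $a\leq\tfrac{l}{2}-\tfrac{|T|}{4}$ for a small $l$ (the sort of refinement relevant to pushing \cref{l:2cover} beyond $2r$-minimal trees, as discussed around \cref{conj:main2}), whereas for this theorem the $\tfrac{|T|}{12}$ cap suffices. From that point on your argument matches the paper's step for step: $r_2\geq \tfrac{|T|}{3}\geq \tfrac{|T|}{4}+a$ from convexity, the case split according to whether $r_2\geq \tfrac{|T|}{2}-2a$ (single ball suffices) or not (take $x=r_2-\tfrac{|T|}{4}-a\in[0,\tfrac{|T|}{4}-3a]$), and finally $r_1\geq \tfrac{|T|}{2}-r_2\geq \tfrac{|T|}{2}-2a-r_2$ so that enlarging the smaller radius to $r_1$ preserves the cover.
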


\begin{proof}
	We first claim that $T$ is $\frac{2|T|}{3}$-minimal. Trivially, $|T|\geq\frac{2|T|}{3}$, so it suffices to prove there exists that there exists a decomposition $\{T',T''\}$ of $T$ such that $|T'|,|T''|\leq\frac{2|T|}{3}$. Let $\{T',T''\}$ be a decomposition chosen to minimize $\max\{|T'|,|T''|\}$. (Such a choice is possible by compactness and continuity of the length function.) Suppose for a contradiction that that this value is greater than $\frac{2|T|}{3}$, without loss of generality $|T'|>\frac{2|T|}{3}$ . Let $v$ be the unique point of intersection of $T',T''$. Let $\{T_1,\ldots,T_k\}$ be the unique decomposition of $T'$ such that each $T_i$ is a non-trivial branch of $T$ with  anchor $v$ and $v$ is a leaf of $T_i$. If $k=1$, we can slightly nudge $v$ into the segment of $T_1$ incident to $v$, hence slightly decreasing $|T'|$ and slightly increasing $|T''|$. Otherwise $k>1$, and so we can assume without loss of generality that $|T_1|\leq \frac{|T'|}{2}$. Then, consider the decomposition $\{T'\setminus (T_1\setminus \{v\}),T''\cup T_1\}$ of $T$. Then $|T'\setminus (T_1\setminus \{v\})|<|T'|$ and $$|T''\cup T_1|=|T''|+|T_1|\leq\left(|T|-|T'|\right)+\frac{|T'|}{2}=|T|-\frac{|T'|}{2}< |T|-\frac{\frac{2|T|}{3}}{2}=\frac{2|T|}{3} < |T'|,$$ yielding the desired contradiction, and finishing the proof of the claim.
	
	Let $a$ be obtained by applying \cref{l:2cover} to $T$ with $l=\frac{2|T|}{3}$. In particular, $a\leq \frac{\frac{2|T|}{3}}{2}-\frac{|T|}{4}=\frac{|T|}{12}$ and then $\frac{|T|}{4}+a\leq \frac{|T|}{3}\leq \frac{2(r_1+r_2)}{3}\leq r_2$. Hence, there exists $x\geq 0$ such that $\frac{|T|}{4}+a+x=r_2$. If $x \geq \frac{|T|}{4}-3a$ then $(r_2)$ is a cover of $T$ by \cref{l:2cover} and hence so is $(r_1,r_2)$.
	Thus we assume $x \leq \frac{|T|}{4}-3a$ and $$\s{\frac{|T|}{4}-3a-x,\frac{|T|}{4}+a+x} = \s{\frac{|T|}{4}-3a-x,r_2}$$ is a cover of $T$ by \cref{l:2cover}. 
	As $|T|\leq 2(r_1+r_2)=2r_1+\frac{|T|}{2}+2a+2x$, it follows that $r_1\geq \frac{|T|}{4}-a-x\geq \frac{|T|}{4}-3a-x$, and so $(r_1,r_2)$ is a cover of $T$, as desired.		
\end{proof}

In fact, the answer to \cref{q:general} for sequences of length two is exactly the set of convex sequences, i.e.
convexity is not only sufficient, but necessary, for  $(r_1,r_2)$ to be the cover of every metric tree $T$ with $|T|\leq 2(r_1+r_2)$. Consider radii $0\leq r_1\leq r_2$ such that $r_1 > \frac{r_2}{2}$ and let $T$ be metric tree with three leaves, where all three segments have length $\frac{2(r_1+r_2)}{3}$. Firstly, $|T|=2(r_1+r_2)$. We claim that $(r_1,r_2)$ is not a cover of $T$. Suppose otherwise, that $T=B_T(r_1,p_1)\cup B_T(r_2,p_2)$ for some $p_1,p_2\in T$. If $v$ is a leaf of $T$, then $v$ must be at distance at most $r_2$ from $p_1$ or $p_2$. However, the segment of which $v$ is a leaf has length $\frac{2(r_1+r_2)}{3}>\frac{r_2+2r_2}{3}=r_2$. Hence, either $p_1$ or $p_2$ must be strictly contained in this segment, i.e. distinct from $v$. Since $T$ has three branches, it is impossible for every branch to strictly contain $p_1$ or $p_2$, yielding the desired contradiction.

\section*{Acknowledgements} 
We thank Pawe\l{} Rz\k{a}\.{z}ewski for bringing \cite{alon_transmitting_1992} to our attention, and Will Perkins for telling us about point processes. 

\bibliographystyle{abbrvurl}
\bibliography{refs}

\begin{thebibliography}{10}

\bibitem{alon_transmitting_1992}
N.~Alon.
\newblock Transmitting in the n-dimensional cube.
\newblock {\em Discrete Applied Mathematics}, 37-38:9--11, July 1992.
\newblock \href {https://doi.org/10.1016/0166-218X(92)90121-P}
  {\path{doi:10.1016/0166-218X(92)90121-P}}.

\bibitem{bastide_improved_2022}
P.~Bastide, M.~Bonamy, A.~Bonato, P.~Charbit, S.~Kamali, T.~Pierron, and
  M.~Rabie.
\newblock Improved pyrotechnics : {Closer} to the burning graph conjecture.
\newblock Mar. 2022.
\newblock arXiv: 2110.10530.
\newblock URL: \url{http://arxiv.org/abs/2110.10530}.

\bibitem{bessy_bounds_2018}
S.~Bessy, A.~Bonato, J.~Janssen, D.~Rautenbach, and E.~Roshanbin.
\newblock Bounds on the burning number.
\newblock {\em Discrete Applied Mathematics}, 235:16--22, Jan. 2018.
\newblock \href {https://doi.org/10.1016/j.dam.2017.09.012}
  {\path{doi:10.1016/j.dam.2017.09.012}}.

\bibitem{bonato_survey_2021}
A.~Bonato.
\newblock A survey of graph burning.
\newblock {\em Contributions to Discrete Mathematics}, 16(1):13, Mar. 2021.
\newblock \href {https://doi.org/10.11575/cdm.v16i1.71194}
  {\path{doi:10.11575/cdm.v16i1.71194}}.

\bibitem{bonato_burning_2014}
A.~Bonato, J.~Janssen, and E.~Roshanbin.
\newblock Burning a {Graph} as a {Model} of {Social} {Contagion}.
\newblock In A.~Bonato, F.~C. Graham, and P.~Prałat, editors, {\em Algorithms
  and {Models} for the {Web} {Graph}}, volume 8882, pages 13--22. Springer
  International Publishing, Cham, 2014.
\newblock \href {https://doi.org/10.1007/978-3-319-13123-8_2}
  {\path{doi:10.1007/978-3-319-13123-8_2}}.

\bibitem{bonato_how_2016}
A.~Bonato, J.~Janssen, and E.~Roshanbin.
\newblock How to {Burn} a {Graph}.
\newblock {\em Internet Mathematics}, 12(1-2):85--100, Mar. 2016.
\newblock \href {https://doi.org/10.1080/15427951.2015.1103339}
  {\path{doi:10.1080/15427951.2015.1103339}}.

\bibitem{bonato_improved_2021}
A.~Bonato and S.~Kamali.
\newblock An improved bound on the burning number of graphs, Oct. 2021.
\newblock arXiv:2110.01087.
\newblock URL: \url{http://arxiv.org/abs/2110.01087}.

\bibitem{bonato_bounds_2019}
A.~Bonato and T.~Lidbetter.
\newblock Bounds on the burning numbers of spiders and path-forests.
\newblock {\em Theoretical Computer Science}, 794:12--19, Nov. 2019.
\newblock \href {https://doi.org/10.1016/j.tcs.2018.05.035}
  {\path{doi:10.1016/j.tcs.2018.05.035}}.

\bibitem{das_burning_2018}
S.~Das, S.~R. Dev, A.~Sadhukhan, U.~k. Sahoo, and S.~Sen.
\newblock Burning {Spiders}.
\newblock In {\em Algorithms and {Discrete} {Applied} {Mathematics}}, volume
  10743, pages 155--163. Springer International Publishing, Cham, 2018.
\newblock \href {https://doi.org/10.1007/978-3-319-74180-2_13}
  {\path{doi:10.1007/978-3-319-74180-2_13}}.

\bibitem{hausdorff_set_1957}
F.~Hausdorff.
\newblock {\em Set {Theory}}, volume 119.
\newblock AMS Chelsea Publishing, 1957.
\newblock URL: \url{https://bookstore.ams.org/chel-119}.

\bibitem{hiller_burning_2021}
M.~Hiller, A.~C.~A. Koster, and E.~Triesch.
\newblock On the {Burning} {Number} of p-{Caterpillars}.
\newblock In {\em Graphs and {Combinatorial} {Optimization}: from {Theory} to
  {Applications}}, volume~5, pages 145--156. Springer International Publishing,
  Cham, 2021.
\newblock \href {https://doi.org/10.1007/978-3-030-63072-0_12}
  {\path{doi:10.1007/978-3-030-63072-0_12}}.

\bibitem{hoeffding_probability_1963}
W.~Hoeffding.
\newblock Probability {Inequalities} for {Sums} of {Bounded} {Random}
  {Variables}.
\newblock {\em Journal of the American Statistical Association},
  58(301):13--30, 1963.
\newblock \href {https://doi.org/10.1080/01621459.1963.10500830}
  {\path{doi:10.1080/01621459.1963.10500830}}.

\bibitem{land_upper_2016}
M.~R. Land and L.~Lu.
\newblock An {Upper} {Bound} on the {Burning} {Number} of {Graphs}.
\newblock In {\em Algorithms and {Models} for the {Web} {Graph}}, volume 10088,
  pages 1--8. Springer International Publishing, 2016.
\newblock \href {https://doi.org/10.1007/978-3-319-49787-7_1}
  {\path{doi:10.1007/978-3-319-49787-7_1}}.

\bibitem{liu_burning_2020}
H.~Liu, X.~Hu, and X.~Hu.
\newblock Burning number of caterpillars.
\newblock {\em Discrete Applied Mathematics}, 284:332--340, Sept. 2020.
\newblock \href {https://doi.org/10.1016/j.dam.2020.03.062}
  {\path{doi:10.1016/j.dam.2020.03.062}}.

\bibitem{omar_burning_2021}
M.~Omar and V.~Rohilla.
\newblock Burning {Graph} {Classes}.
\newblock Nov. 2021.
\newblock arXiv: 2111.01328.
\newblock URL: \url{http://arxiv.org/abs/2111.01328}.

\bibitem{reiss_course_1993}
R.-D. Reiss.
\newblock {\em A {Course} on {Point} {Processes}}.
\newblock Springer {Series} in {Statistics}. Springer New York, 1993.
\newblock \href {https://doi.org/10.1007/978-1-4613-9308-5}
  {\path{doi:10.1007/978-1-4613-9308-5}}.

\bibitem{roshanbin_burning_2016}
E.~Roshanbin.
\newblock {\em Burning a graph as a model for the spread of social contagion}.
\newblock PhD thesis, Dalhousie University, Apr. 2016.
\newblock URL: \url{http://hdl.handle.net/10222/71417}.

\end{thebibliography}

\end{document}